\providecommand{\U}[1]{\protect\rule{.1in}{.1in}}
\providecommand{\U}[1]{\protect\rule{.1in}{.1in}}
\newcommand{\diag}{{\rm diag}}
\newcommand{\BE}{\begin{equation}}
\newcommand{\EE}{\end{equation}}
\numberwithin{equation}{section}
\newtheorem{proposition}{Proposition}[section]
\newtheorem{theorem}[proposition]{Theorem}
\newtheorem{lemma}[proposition]{Lemma}
\newtheorem{definition}[proposition]{Definition}
\newtheorem{remark}[proposition]{Remark}
\begin{document}

	\title{{\bf {Exponential stability  for {time-delay} neural networks via {new}
			weighted integral inequalities}}}
	\author{
		Seakweng Vong\thanks{
			Email: swvong@umac.mo. This research is funded by The Science and Technology Development Fund, Macau SAR (File no. 0005/2019/A) and funded by University of Macau (File no. MYRG2018-00047-FST, MYRG2017-00098-FST).}
		\qquad
		 Kachon Hoi
		\thanks{Email:
			db62522@umac.mo.}\qquad
		Chenyang Shi\thanks{Corresponding author. Email: shichenyang15@gmail.com.}
		\\
		{\footnotesize  \textit{Department of Mathematics, University of Macau, Avenida da Universidade, Macau, China}}}
	\date{}
	\maketitle	
	
	\begin{abstract}
		We study  exponential stability for a kind of neural networks having time-varying delay.
		{By extending the auxiliary function-based integral inequality,
			 a novel integral inequality is derived by using weighted orthogonal functions of which one is discontinuous. Then, the new inequality is applied to investigate the exponential stability of time-delay neural networks via Lyapunov-Krasovskii functional (LKF) method. Numerical examples are given to verify the advantages of the proposed criterion.}
	\end{abstract}
	{\bf Key words:} Neural networks; exponential stability; {Lyapunov-Krasovskii functional}; time-varying delay.

	\section{Introduction}
With the development of new scientific technology, neural networks have been adopted to different applications such as image decryption, pattern recognition, and finance \cite{XiaoMQ1,LiuY1,ShenH,YanH,ZhangL}.
In general, a practical neural network involves a lot of  neurons to perform several complex tasks.
As it was particularly pointed out in \cite{MarcusC}, time delays of information exchange between these large number of neurons are  unavoidable.
At the same time, { a delay term in a system}, even though it may not be large, is usually a key factor to cause  a neural network unstable.
Therefore, the problem of studying the effect of time delay plays a critical role  in the study of  neural networks
and this issue has been extensively studied in recent years\cite{LiuY2,LiuY3}.

{ As we know, it is an important task to make stability criteria  less conservative when analyzing
time-delay systems \cite{XiaoMQ3,WangS1,XiaoMQ4,KLTeo2,KLTeo3}}.
More specifically, exponential stability is desirable for some applications\cite{XiaoMQ2,Hien,Vong2,Vong3}.
To this end, various approaches have been developed to the subject,
which include techniques of free weighting matrices \cite{HeYLiu}, reciprocally convex {optimization} \cite{ParkP1} and delay-partitioning \cite{GuK,WangZ}.
Exponential stability analysis of time-delay neural networks aims at deriving an admissible delay upper bound (ADUB)  such that the delayed neural networks are stable for all time-delays less than the obtained ADUB.
Roughly speaking, ADUB measures the conservatism of a stability criterion.
If a stability criterion can yield a larger or sharper ADUB than another one, the criterion is less conservative.
It is shown in \cite{HeY} that the LKF method plus linear matrix inequality (LMI) technique is useful to determining the ADUB for delayed systems.

Until now, a great number of integral inequalities have been
	proposed to study delayed systems, such as the Wirtinger-based inequality \cite{Vong2,Vong3},
	the Bessel-Legendre inequality \cite{LIZ2,LIZ1,LIZ3} and the auxiliary function-based
	inequalities \cite{LIZ4,LiuX}. In this paper, we study the auxiliary function-based inequality which was developed for the theoretical study of the delayed systems in \cite{ParkP}.
Since it gives tighter estimates than the Jensen inequality,  compared with the extend form of Jensen inequality \cite{Hien2,Levan1},
employing this inequality can yield  less conservative asymptotic stability criteria for some delayed systems.
Based on \cite{ParkP}, a further improved integral inequality was established in \cite{LiuX} by considering
a group of orthogonal functions of which one is discontinuous.
Instead of using high-degree polynomial to sharpen the bound, a discontinuous function  is employed to reduce the
{number of decision variables (NODVs)}.

In the current study, we aim to investigate the exponential stability of neural networks having time-varying delay
following ideas in \cite{ParkP,LiuX}. Different from \cite{LiuX},
basing on our previous study  \cite{Vong}, {we considered the
orthogonal sets  $\{p_0({\cdot}),\ \ p_1(\cdot),\ \ p_2(\cdot)\}$ with respect to an exponential term, which is called a weight function.}
By combining the decomposition of the state vectors,
{which consists of polynomials,} and
$\{p_0(\cdot),\ \ p_2(\cdot),\ \ p_3(\cdot)\}$,
where $p_3(\cdot)$ is a discontinuous function,
a novel weighted inequality is established.
{Our new weighted inequality was derived by improving the one in \cite{LiuX}
and estimating integrals with exponential term as a whole.} 
An improved  criterion which guarantees exponential stability of neural networks having time-varying delay
is {derived by using our new inequality.
Numerical examples are given to confirm the
advantage of the method proposed in this paper.}

The following points outline the main contributions of this paper:
\begin{itemize}
\item[1.]  A new weighted inequality, {which generalizes the integral inequality based on auxiliary function in \cite{ParkP}, is established.}
The inequality can be used to establish an improved exponential stability criterion for  delayed  systems.

\item[2.]
We consider time-varying delay which is not necessary   nondecreasing (noting that nondecreasing was assumed in the proof of     \cite{LiuX}).
By further studying  the LKF in \cite{LiuX}, we find that some terms in the LKF can be removed to reduce the number of decision variables  in the stability criterion without affecting its performance.

\end{itemize}

{We  organize our paper  as follows. The model of neural networks with time-varying delay and the new weighted inequality are introduced in Section 2. By using a refined LKF,  our main theoretial result is given in Section 3. For the last section, simulations are carried out to demonstrate the proposed criterion. }
	

\bigskip
{\bf Notations:}
{We use $\mathbb{R}^n$ and $\mathbb{R}^{n\times m}$ are the sets of $m$-dimensional Euclidean vector space and $n\times m$ real matrix space.}
When a real matrix $P$ is symmetric and positive definite (semi-positive definite), we describe this using $P>0~ (\geq0)$.
The notation $\diag\{\cdots\}$ refers to  a diagonal matrix. Additionally, we take $\mathbb{S}_n^{+}$ as the set of symmetric positive definite matrices and symmetric terms in a symmetric matrix are marked as $*$ for simplicity of presentation.
Finally, we define $sym(A)=A+A^T$, where   $T$ represents transpose of a matrix

\section{Preliminaries}
{We study time-delay neural networks as follow }
\begin{equation} \label{a1}
\dot{x}(t)=-Cx(t)+Ag(x(t))+Bg(x(t-h(t)))+u,
\end{equation}
where the neuron state vector is denoted by $x(\cdot)=[x_{1}(\cdot),x_{2}(\cdot),\ldots,x_{n}(\cdot)]^{T}\in \mathbb{R}^{n}$ and the activation function is $g(x(\cdot))=[g_{1}(x_{1}(\cdot)),g_{2}(x_{2}(\cdot)),\ldots,g_{n}(x_{n}(\cdot))]^{T}\in \mathbb{R}^{n}$
. The vector {$u=[u_1,u_2,\ldots,u_{n}]^{T}\in \mathbb{R}^{n}$} is an input to the network.
Entries of the matrix $C=\emph{diag}\{c_{1},c_{2},\ldots,c_{n}\}$ satisfy $c_{i}>0$.
{The matrices $A$ and $B$ are weight matrices corresponding to connection.
The differentiable function $h(t)$ denotes the  time-varying delay} and it holds that
\begin{equation} \label{a2}
0\leq h(t)\leq h
\end{equation}
and
\begin{equation} \label{a3}
{|\dot{h}(t)|}\leq \mu
\end{equation}
for some constants $\mu$ and $h$.
As in previous studies,
we  assumed that each activation function of (\ref{a1})
satisfies:
\begin{equation} \label{a4}
0\leq \frac{g_{j}(x)-g_{i}(y)}{x-y}\leq L_{j},\quad x,y\in \mathbb{R},\quad x\neq y,\quad j=1,2,\ldots,n,
\end{equation}
for some positive constants $L_{j},~j=1,2,\ldots,n$.\\*
\indent Under (\ref{a4}),
there exists an $x^{*}=[x^{*}_{1},x^{*}_{2},\ldots,x^{*}_{n}]^{T}$ such that
\begin{equation} \label{a5}
Cx^{*}=Ag(x^{*})+Bg(x^{*})+u.
\end{equation}
\indent
Then shift the the equilibrium point $x^{*}$ of system (\ref{a1}) to the origin by the transform
$z(\cdot)=x(\cdot)-x^{*}$. Then $z=[z_{1}(\cdot),z_{2}(\cdot),\ldots,z_{n}(\cdot)]^{T}$ satisfies
\begin{equation} \label{a6}
\dot{z}(t)=-Cz(t)+Af(z(t))+Bf(z(t-h(t)))
\end{equation}
where $f(z(\cdot))=[f_{1}(z_{1}(\cdot)),f_{2}(z_{2}(\cdot)),\ldots,f_{n}(z_{n}(\cdot))]^{T}$
and $f_{j}(z_{j}(\cdot))=g_{j}(z_{j}(\cdot)+x^{*}_{j})-g_{j}(x_{j}^{*}), j=1,2,\ldots,n$.
With these notations, we have
\begin{equation} \label{a7}
0\leq \frac{f_j(z_{j})}{z_{j}}\leq L_{j},\quad f_{j}(0)=0,\quad \forall z_{j}\neq 0,\quad j=1,2,\ldots,n.
\end{equation}
Definition of exponential stability of  (\ref{a6}) is given below.
\begin{definition} \label{Definition1}
{\rm\cite{LiuX}} { The neural network (\ref{a6}) is exponentially stable at the origin if, for $t>0$,
$$\|z(t)\|\,\leq H\phi e^{-kt}$$
holds for some positive constants $k>0$ and $H\geq1$,
where $\phi =sup_{-h\leq \theta\leq 0}\|z(\theta)\|\,$. In this situation, we call $k$  the exponential convergence rate.}\\
\end{definition}
{The well-known reciprocally convex inequality are useful for the theoretical proof and it is summarized as below:}
\begin{lemma} \label{Lemma1}
{\rm\cite{Park.P}} Suppose that $f_1,f_2,\ldots,f_n:\mathbb{R}^{m}\rightarrow \mathbb{R}$ take positive values in an open subsets $D$ of $\mathbb{R}^{m}$ then the below equation holds:
\begin{equation} \label{a8}
\min\limits_{\Big\{\alpha_i\mid\alpha_i>0,\sum\limits_{i} a_i=1\Big\}}\sum\limits_{i} \frac{1}{\alpha_i}f_{i}(t)=\sum\limits_{i} f_i(t)+\max_{g_{ij}(t)}\sum\limits_{i\neq j} g_{ij}(t)
\end{equation}
subject to
\begin{equation} \label{a9}
\Bigg\{g_{ij}:\mathbb{R}^{m}\rightarrow \mathbb{R},g_{ji}\triangleq g_{ij},
\left[ \begin{array}{ccc}
f_{i}(t) & g_{ij}(t) \\
g_{ji}(t) & f_{j}(t) \\
\end{array} \right] \geq 0\Bigg\}\nonumber
\end{equation}
\end{lemma}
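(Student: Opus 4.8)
The plan is to evaluate both optimization problems in closed form and verify that they attain the common value $\left(\sum_i \sqrt{f_i(t)}\right)^2$. Throughout I fix $t$ and abbreviate $f_i$ for $f_i(t)$, using that each $f_i > 0$ on $D$. For the left-hand minimization over the open simplex $\{\alpha_i > 0,\ \sum_i \alpha_i = 1\}$, I would apply the Cauchy--Schwarz inequality in the form $\left(\sum_i \alpha_i\right)\left(\sum_i \frac{f_i}{\alpha_i}\right) \geq \left(\sum_i \sqrt{f_i}\right)^2$, taking $a_i = \sqrt{\alpha_i}$ and $b_i = \sqrt{f_i}/\sqrt{\alpha_i}$. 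Since $\sum_i \alpha_i = 1$, this bounds the minimand below by $\left(\sum_i \sqrt{f_i}\right)^2$, and the Cauchy--Schwarz equality condition $\alpha_i \propto \sqrt{f_i}$ furnishes an admissible minimizer precisely because positivity of the $f_i$ forces each such $\alpha_i$ to lie strictly inside the simplex. Hence the left-hand side equals $\left(\sum_i \sqrt{f_i}\right)^2$.

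For the right-hand side, I would first characterize the feasible set of the off-diagonal functions $g_{ij}$. Because $f_i, f_j > 0$, the $2\times 2$ positive semidefiniteness constraint is equivalent, via the Schur complement (equivalently the determinant condition $f_i f_j - g_{ij}^2 \geq 0$), to $g_{ij} \leq \sqrt{f_i f_j}$. Crucially, both the objective $\sum_{i \neq j} g_{ij}$ and the constraints separate across the unordered pairs $\{i,j\}$, so the maximization decouples and is attained termwise at $g_{ij} = \sqrt{f_i f_j}$. Invoking the symmetry $g_{ji} = g_{ij}$, this gives $\max \sum_{i \neq j} g_{ij} = \sum_{i \neq j} \sqrt{f_i f_j}$.

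Combining the two computations, the right-hand side equals $\sum_i f_i + \sum_{i \neq j} \sqrt{f_i f_j}$, which is exactly the expansion of $\left(\sum_i \sqrt{f_i}\right)^2$ and therefore coincides with the left-hand side, establishing the claimed identity. I expect the only delicate points to be (i) verifying that the constrained maximum genuinely decouples into independent pairwise problems, so that no coupling between distinct $g_{ij}$ forces a trade-off, and (ii) confirming that the Cauchy--Schwarz optimizer $\alpha_i \propto \sqrt{f_i}$ sits inside the open simplex, so that a true minimum rather than a mere infimum is achieved. Both are secured by the strict positivity of the $f_i$ on $D$.
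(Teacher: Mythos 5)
Your proof is correct. The paper itself gives no proof of Lemma \ref{Lemma1} --- it is quoted directly from the cited reference \cite{Park.P} --- and your argument (showing both sides equal $\bigl(\sum_i \sqrt{f_i(t)}\bigr)^2$, the minimum via Cauchy--Schwarz with the interior optimizer $\alpha_i \propto \sqrt{f_i}$, and the maximum via the determinant/Schur condition $g_{ij}^2 \leq f_i f_j$ decoupling over unordered pairs) is precisely the standard proof given in that reference, with the two delicate points you flag correctly resolved by the strict positivity of the $f_i$ on $D$.
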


{In the following, some new weighted integral inequalities are derived by refining those established in  \cite{LiuX} and \cite{Vong}.  Let $p_i(u)$ for $i=0,1,2,3$ be some scalar functions  on $[a,b]$ and the weight function $w(u)$ is large than zero.
Considering a product between two functions as follow}
$$\langle p_i,p_j \rangle=\int_{a}^bp_i(u)p_j(u) w(u)du$$  and  functions $\{p_0,p_1,p_2,p_3\}$ satisfying the ``orthogonal"    properties as follow:
\begin{equation}\label{ortho0}
\int_{a}^{b}  p_0(u)p_i(u)w(u)du=0\,\,( i=1,2,3),\,\, \int_{a}^{b} p_1(u)p_2(u)w(u)du=0, \,\,\int_{a}^{b} p_2(u)p_3(u)w(u)du=0.
\end{equation}
In particular, we take $p_0(u)\equiv1$. The main estimate in this paper read as:
\begin{lemma} \label{Lemma2}
	For a  matrix $R\in\mathbb{S}_n^{+}$, we have
%
{\setlength\arraycolsep{2pt}
\begin{eqnarray} \label{a10}
\int_{a}^{b} \phi^{T}(u)R\phi(u)w(u)du & \geq & \frac{1}{q_0}F_0^{T}RF_0+\frac{1}{q_1}F_{1}^{T}RF_{1}+\frac{1}{q_2}F_{2}^{T}RF_{2}\nonumber\\ & & +\frac{1}{q_3}\biggr[F_{3}-\frac{q_{13}}{q_{1}}F_{1}\biggr]^{T}R\biggr[F_{3}-\frac{q_{13}}{q_{1}}F_{1}\biggr]
\end{eqnarray}}
where
$$F_i=\int_{a}^{b}p_{i}(u)\phi_{i}(u)w(u)du,\quad q_{i}=\int_{a}^{b}p_{i}^{2}(u)w(u)du,\quad i=0,1,2,3, $$
$$F_0=\int_{a}^{b}\phi(u)w(u)du,\quad q_0=\int_{a}^{b}w(u)du,\quad q_{13}=\int_{a}^{b}p_{1}(u)p_{3}(u)w(u)du $$
\end{lemma}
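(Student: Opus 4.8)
The plan is to reduce the statement to the fully orthogonal case by a single Gram--Schmidt correction, and then to apply the elementary fact that $\int_a^b \psi^T(u)R\,\psi(u)\,w(u)\,du \ge 0$ for every $\psi$ whenever $R\ge 0$. Among $\{p_0,p_1,p_2,p_3\}$ the only failure of orthogonality permitted by (\ref{ortho0}) is the pair $(p_1,p_3)$, whose weighted inner product is $q_{13}$; all other pairings vanish and $p_0\equiv 1$. Accordingly I would first introduce the corrected function $\tilde p_3 := p_3 - \frac{q_{13}}{q_1}p_1$ and verify, using (\ref{ortho0}), that $\langle \tilde p_3,p_0\rangle=\langle \tilde p_3,p_1\rangle=\langle \tilde p_3,p_2\rangle=0$. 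Thus $\{p_0,p_1,p_2,\tilde p_3\}$ is a genuinely orthogonal family for $\langle\cdot,\cdot\rangle$, with squared norms $q_0,q_1,q_2$ and $\tilde q_3:=\langle \tilde p_3,\tilde p_3\rangle = q_3 - q_{13}^2/q_1$, the last value obtained by expanding the square and using $\langle p_1,p_3\rangle=q_{13}$.

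Next I would write $\phi$ as its weighted orthogonal projection onto $\mathrm{span}\{p_0,p_1,p_2,\tilde p_3\}$ plus a remainder, namely $\phi = \gamma_0 p_0 + \gamma_1 p_1 + \gamma_2 p_2 + \gamma_3 \tilde p_3 + \phi_\perp$, where each coefficient is the vector $\gamma_i = \big(\int_a^b e_i(u)\phi(u)w(u)\,du\big)/\langle e_i,e_i\rangle \in \mathbb{R}^n$ (with $e_i$ the $i$-th basis function) and $\phi_\perp$ is $\langle\cdot,\cdot\rangle$-orthogonal to every $e_i$. Substituting into $\int_a^b \phi^T(u)R\,\phi(u)w(u)\,du$ and expanding, every cross term carries either a factor $\langle e_i,e_j\rangle=0$ for $i\ne j$ or a factor $\int_a^b e_i(u)\phi_\perp(u)w(u)\,du=0$, so the integral collapses to $\sum_i \gamma_i^T R\,\gamma_i\,\langle e_i,e_i\rangle + \int_a^b \phi_\perp^T(u)R\,\phi_\perp(u)w(u)\,du$. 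Since $R\ge 0$ the final term is nonnegative and may be discarded, producing a lower bound assembled solely from the four projection coefficients.

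It then remains to identify those coefficients and relax to the displayed form. Because $p_0\equiv 1$ one has $\gamma_0 = F_0/q_0$, and likewise $\gamma_1=F_1/q_1$, $\gamma_2=F_2/q_2$, while $\int_a^b \tilde p_3(u)\phi(u)w(u)\,du = F_3 - \frac{q_{13}}{q_1}F_1$ gives $\gamma_3 = (F_3 - \frac{q_{13}}{q_1}F_1)/\tilde q_3$. Inserting these yields the sharp estimate whose last term carries the factor $1/\tilde q_3 = 1/(q_3 - q_{13}^2/q_1)$. The only delicate step is the passage to (\ref{a10}): since $q_{13}^2/q_1\ge 0$ we have $\tilde q_3 \le q_3$, hence $1/\tilde q_3 \ge 1/q_3$, and because $\big[F_3-\frac{q_{13}}{q_1}F_1\big]^T R\big[F_3-\frac{q_{13}}{q_1}F_1\big]\ge 0$ (again using $R\ge 0$) the factor $1/\tilde q_3$ may be weakened to $1/q_3$ while preserving the inequality. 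I expect the main bookkeeping effort to be the verification that all cross terms vanish and the correct evaluation of $\tilde q_3$; the replacement of $1/\tilde q_3$ by $1/q_3$ is what reconciles the Gram--Schmidt computation with the exact coefficient appearing in (\ref{a10}).
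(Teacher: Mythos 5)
Your proof is correct, and it is a genuine variant of the paper's argument rather than a reproduction of it. The paper also starts from $\int_a^b e^T(u)Re(u)w(u)\,du\ge 0$, but it keeps the non-orthogonal function $p_3$ itself in the residual, writing $e(u)=\phi(u)-\frac{F_0}{q_0}-\frac{F_1}{q_1}p_1(u)-\frac{F_2}{q_2}p_2(u)-p_3(u)v$ and choosing $v=\frac{1}{q_3}\bigl(F_3-\frac{q_{13}}{q_1}F_1\bigr)$; with the $p_1$-coefficient frozen at $F_1/q_1$, the cross term contributes $-2q_3v^TRv$ and the square term $+q_3v^TRv$, so the completion of squares lands directly on the coefficient $1/q_3$ with no further relaxation. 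You instead perform the Gram--Schmidt correction $\tilde p_3=p_3-\frac{q_{13}}{q_1}p_1$, expand $\phi$ in the resulting genuinely orthogonal family, and obtain the sharper constant $1/\tilde q_3$ with $\tilde q_3=q_3-q_{13}^2/q_1\le q_3$, which you then weaken to $1/q_3$. What your route buys is the observation that the lemma as stated is not tight: the last term could carry the factor $1/(q_3-q_{13}^2/q_1)$ instead of $1/q_3$. What it costs is the extra relaxation step and one small point you should make explicit: the passage from $1/\tilde q_3$ to $1/q_3$ requires $\tilde q_3>0$. This is harmless --- since $w>0$, $\tilde q_3=\int_a^b\tilde p_3^2(u)w(u)\,du\ge 0$, and if $\tilde q_3=0$ then $\tilde p_3=0$ almost everywhere, whence $F_3-\frac{q_{13}}{q_1}F_1=\int_a^b\tilde p_3(u)\phi(u)w(u)\,du=0$ and the last term of \eqref{a10} vanishes, so the inequality holds trivially --- but the degenerate case should be mentioned rather than silently divided by.
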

\begin{proof}
 Let
\begin{displaymath}
e(u)=\phi(u)-\frac{F_0}{q_0}-\frac{F_1}{q_1}p_{1}(u)-\frac{F_2}{q_2}p_{2}(u)-p_{3}(u)v
\end{displaymath}
where $v$ is a constant vector in $\mathbb{R}^{n}$. Since $R$ is positive definite, if we take
\begin{displaymath}
v=\frac{F_3}{q_3}-\frac{\int_{a}^{b}p_1(u)p_3(u)w(u)du}{q_1q_3}F_1,
\end{displaymath}
we have
{\setlength\arraycolsep{2pt}
\begin{eqnarray}
&\int_{a}^{b}&e^{T}(u)Re(u)w(u)du  \nonumber\\
&= & \int_{a}^{b}\biggr[\phi(u)-\frac{F_0}{q_0}-\frac{F_1}{q_1}p_{1}(u)-\frac{F_2}{q_2}p_{2}(u)-p_{3}(u)v\biggr]^{T}R \biggr[\phi(u)-\frac{F_0}{q_0}\nonumber\\ & &-\frac{F_1}{q_1}p_{1}(u)-\frac{F_2}{q_2}p_{2}(u)-p_{3}(u)v\biggr]w(u)du\nonumber\\
& = & \int_{a}^{b}\biggr[\phi(u)-\frac{F_0}{q_0}-\frac{F_1}{q_1}p_{1}(u)-\frac{F_2}{q_2}p_{2}(u)\biggr]^{T}R\biggr[\phi(u)-\frac{F_0}{q_0}
-\frac{F_1}{q_1}p_{1}(u)-\frac{F_2}{q_2}p_{2}(u)\biggr]w(u)du\nonumber\\
& &-2\int_{a}^{b}\biggr[\phi(u)p_3(u)-\frac{F_0}{q_0}
p_3(u)-\frac{F_1}{q_1}p_1(u)p_3(u)-\frac{F_2}{q_2}p_2(u)p_3(u)\biggr]^{T}w(u)duRv\nonumber\\
& &
+\int_{a}^{b}p_{3}^{2}(u)w(u)duv^{T}Rv\nonumber\\
& = & \int_{a}^{b}\biggr[\phi(u)-\frac{F_0}{q_0}-\frac{F_1}{q_1}p_{1}(u)-\frac{F_2}{q_2}p_{2}(u)\biggr]^{T}R\biggr[\phi(u)-\frac{F_0}{q_0}
-\frac{F_1}{q_1}p_{1}(u)-\frac{F_2}{q_2}p_{2}(u)\biggr]w(u)du-q_3v^{T}Rv\nonumber\\
& = &
\int_{a}^{b}\phi^T(u)R\phi(u)w(u)du-2\int_{a}^{b}\phi^T(u)R\biggr[\frac{F_0}{q_0}+\frac{F_1}{q_1}p_{1}(u)
+\frac{F_2}{q_2}p_{2}(u)\biggr]w(u)du\nonumber\\
& &+\int_{a}^{b}\biggr[\frac{F_0}{q_0}+\frac{F_1}{q_1}p_{1}(u)+\frac{F_2}{q_2}p_{2}(u)\biggr]^{T}
R\biggr[\frac{F_0}{q_0}+\frac{F_1}{q_1}p_{1}(u)+\frac{F_2}{q_2}p_{2}(u)\biggr]w(u)du-q_3v^{T}Rv\nonumber\\
& = &
\int_{a}^{b}\phi^T(u)R\phi(u)w(u)du-\frac{1}{q_0}F_0^{T}RF_0-\frac{1}{q_1}F_{1}^{T}RF_{1}
-\frac{1}{q_2}F_{2}^{T}RF_{2}-q_{3}v^{T}Rv\geq 0,\nonumber
\end{eqnarray}}
which is equivalent to
{\setlength\arraycolsep{2pt}
\begin{eqnarray}
\int_{a}^{b} \phi^T(u)R\phi(u)w(u)du\geq \frac{1}{q_0}F_0^{T}RF_0+\frac{1}{q_1}F_{1}^{T}RF_{1}+\frac{1}{q_2}F_{2}^{T}RF_{2} +\frac{1}{q_3}\biggr[F_{3}-\frac{q_{13}}{q_{1}}F_{1}\biggr]^{T}R\biggr[F_{3}-\frac{q_{13}}{q_{1}}F_{1}\biggr].\nonumber
\end{eqnarray}}
\end{proof}

Consider the weight function $w(u)=e^{-2k(u-b)}$ and $\phi(u)=e^{2k(u-b)}z(u)$ in \eqref{a10}. We can get the following inequality:
\begin{lemma} \label{Lemma4}
{Consider an integrable function $z:[a,b]\rightarrow \mathbb{R}^n$ and a matrix $R\in\mathbb{S}_n^{+}$.
We have the following inequality:}
\end{lemma}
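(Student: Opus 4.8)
The plan is to obtain Lemma~\ref{Lemma4} as a direct specialization of the general weighted inequality \eqref{a10} of Lemma~\ref{Lemma2}; no fresh estimate is required, since the quadratic lower bound has already been established in full generality. Concretely, I would substitute the stated weight $w(u)=e^{-2k(u-b)}$ and the stated test function $\phi(u)=e^{2k(u-b)}z(u)$ into \eqref{a10} and then carry out the algebraic reductions that the exponential factors permit.

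The observation that makes the substitution clean is that the two exponentials are reciprocal. First I would record that $\phi(u)w(u)=e^{2k(u-b)}z(u)\,e^{-2k(u-b)}=z(u)$ and that $\phi^{T}(u)R\phi(u)w(u)=e^{4k(u-b)}z^{T}(u)Rz(u)\,e^{-2k(u-b)}=e^{2k(u-b)}z^{T}(u)Rz(u)$. Hence the left-hand side of \eqref{a10} collapses to the exponentially weighted quadratic $\int_{a}^{b}e^{2k(u-b)}z^{T}(u)Rz(u)\,du$, which is exactly the quantity needed in the Lyapunov--Krasovskii analysis. The same cancellation turns each moment into an ordinary (unweighted) integral of $z$: $F_{0}=\int_{a}^{b}z(u)\,du$ and $F_{i}=\int_{a}^{b}p_{i}(u)z(u)\,du$ for $i=1,2,3$.

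What genuinely has to be done is the explicit construction of the scalar functions $p_{1},p_{2},p_{3}$ together with the evaluation of the scalar constants. Because the inner product in \eqref{ortho0} carries the weight $e^{-2k(u-b)}$, these functions cannot be the ordinary Legendre-type orthogonal polynomials; they must be weighted-orthogonal with respect to $w$. Following the construction in \cite{Vong} I would keep $p_{0}\equiv1$, build $p_{1}$ and $p_{2}$ by Gram--Schmidt of $\{1,u,u^{2}\}$ against $\langle\cdot,\cdot\rangle$, and take $p_{3}$ to be the discontinuous function of \cite{LiuX}, arranged so that the three conditions $\langle p_{0},p_{i}\rangle=0$, $\langle p_{1},p_{2}\rangle=0$ and $\langle p_{2},p_{3}\rangle=0$ of \eqref{ortho0} hold. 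I expect the main obstacle to be exactly this verification together with the closed-form evaluation of the exponential moment integrals $q_{0}=\int_{a}^{b}e^{-2k(u-b)}\,du=\tfrac{1}{2k}\bigl(e^{2k(b-a)}-1\bigr)$, $q_{i}=\int_{a}^{b}p_{i}^{2}(u)e^{-2k(u-b)}\,du$, and the cross term $q_{13}=\int_{a}^{b}p_{1}(u)p_{3}(u)e^{-2k(u-b)}\,du$, all of which require repeated integration by parts and careful bookkeeping of the factors $e^{\pm 2k(b-a)}$.

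Finally I would substitute the simplified moments $F_{0},F_{1},F_{2},F_{3}$ and the computed constants $q_{0},q_{1},q_{2},q_{3},q_{13}$ back into the right-hand side of \eqref{a10} to read off the stated inequality. The correction term $\tfrac{1}{q_{3}}[F_{3}-\tfrac{q_{13}}{q_{1}}F_{1}]^{T}R[F_{3}-\tfrac{q_{13}}{q_{1}}F_{1}]$ survives precisely because \eqref{ortho0} does not force $\langle p_{1},p_{3}\rangle=0$; keeping $p_{3}$ only partially orthogonal, rather than enlarging the polynomial basis, is what lets the bound remain sharp while holding down the number of decision variables, in line with the discussion preceding the lemma.
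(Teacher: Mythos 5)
Your proposal is correct and follows essentially the same route as the paper: specialize Lemma~\ref{Lemma2} with $w(u)=e^{-2k(u-b)}$ and $\phi(u)=e^{2k(u-b)}z(u)$ so the exponentials cancel in the moments $F_i$, build $p_1,p_2$ by weighted Gram--Schmidt and take the discontinuous $p_3$ (with the breakpoint $\xi$ chosen so that $\langle p_3,p_2\rangle=0$ and the scaling $c_4$ so that $\langle p_3,1\rangle=0$), then evaluate the constants $q_i$ and $q_{13}$. Your remark that the correction term survives precisely because $\langle p_1,p_3\rangle\neq 0$ matches the paper's construction exactly.
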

\begin{eqnarray}\label{a11}
\int_{a}^{b} e^{2k(u-b)}z^{T}(u)Rz(u)du & \geq & \frac{1}{q_0}\Omega_0^{T}R\Omega_0+\frac{1}{q_1}\Omega_{1}^{T}R\Omega_{1}+\frac{1}{q_2}\Omega_{2}^{T}R\Omega_{2}\nonumber\\ &&+\frac{1}{q_3}\biggr[\Omega_{3}-\frac{q_{13}}{q_{1}}\Omega_{1}\biggr]^{T}R\biggr[\Omega_{3}-\frac{q_{13}}{q_{1}}\Omega_{1}\biggr]
\end{eqnarray}
where
{\setlength\arraycolsep{2pt}
	\begin{eqnarray}
	\Omega_0 & = & \int_{a}^{b}z(u)du,\ \ \
	\Omega_1  =  c_{1}\int_{a}^{b}z(u)du+\int_{a}^{b}\int_{s}^{b}z(u)duds,\nonumber\\
	\Omega_2 & = & c_{3}\int_{a}^{b}z(u)du+c_{2}\int_{a}^{b}\int_{s}^{b}z(u)duds+2\int_{a}^{b}\int_{s}^{b}\int_{u}^{b}z(v)dvduds,\nonumber\\
	\Omega_3 & = & \int_{a}^{b}z(u)du+c_4\int_{a}^{\xi}z(u)du,\ \ \
	w = e^{2k(b-a)}, \ \ \
	c_1 = \frac{b-a}{w-1}-\frac{1}{2k},\nonumber\\
	c_2 & = & {-\frac{\frac{(w-1)}{2k^3}-(b-a)^3-\frac{(b-a)^2}{k}-\frac{(b-a)}{2k^2}-\frac{(b-a)^3}{w-1}-\frac{(b-a)^2}{k(w-1)}}{\frac{w-1}{4k^2}-(b-a)^2-\frac{(b-a)^2}{w-1}}}\nonumber\\
	c_3 & = & c_1c_2-(\frac{1}{2k^2}-\frac{(b-a)^2}{w-1}-\frac{(b-a)}{k(w-1)}),\ \ \
	c_4 = -\frac{w-1}{w-e^{-2k(\xi-b)}},\nonumber\\
	q_0 & = & \frac{w-1}{2k},\ \ \
	q_1= {\frac{w-1}{8k^3}-\frac{(b-a)^2}{2k}-\frac{(b-a)^2}{2k(w-1)}},\nonumber\\
	q_2 & = & {\frac{3(w-1)}{4k^5}-\frac{(b-a)^4}{2k}-\frac{(b-a)^3}{k^2}-\frac{3(b-a)^2}{2k^3}- \frac{3(b-a)}{2k^4}-c_2^2q_1-(c_3-c_1c_2)^2q_0.}\nonumber\\
	q_3 & = & {(\frac{w-1}{2k})(\frac{e^{-2k(\xi-b)}-1}{w-e^{-2k(\xi-b)}})},\ \ \
	q_{13}= \frac{(w-1)(\xi-a)e^{-2k(\xi-b)}}{2k(w-e^{-2k(\xi-b)})}-\frac{b-a}{2k}.\nonumber
	\end{eqnarray}}
\begin{proof}
In order to use Lemma \ref{Lemma2}, we first introduce the function $p_3$.  Noting that $p_3$ can be a discontinuous function and it must satisfy $\langle p_3,1\rangle=\langle p_3,p_2\rangle=0$ {and $\langle p_3,p_1\rangle\neq0$}.
Let $\xi\in(a,b)$ be such that $\int_{a}^{\xi}p_2(u)w(u)dt=0$ and denote $p_3=1-\frac{\langle 1,1\rangle}{\langle 1,\chi\rangle}\chi$, where $\chi(u)=\left\{ \begin{array}{ll}
1 & \textrm{if $u\in[a,\xi]$}\\
0 & \textrm{if $u\in(\xi,b]$}
\end{array} \right.$. We can get $\langle p_3,p_2\rangle=\langle 1,p_2\rangle-\frac{\langle 1,1\rangle}{\langle 1,\chi\rangle}\langle\chi,p_2\rangle=0$ and $\langle p_3,1\rangle=\langle 1,1\rangle-\frac{\langle 1,1\rangle}{\langle 1,\chi\rangle} \langle\chi,1\rangle=0$.

	Then we take $p_1(u),~p_2(u)$ as linear and quadratic polynomial:
 $$ p_1(u)=(u-a)+c_1,\ \ \ p_2(u)=(u-a)^2+c_2(u-a)+c_3.$$
	which satisfied $\int_{a}^{b}p_i(u)w(u)du=0\
	\ (i=1,2)$ and $\int_{a}^{b}p_1(u)p_2(u)w(u)du=0$. We can get $c_1=-\frac{\int_{a}^{b}(u-a)w(u)du}{\int_{a}^{b}w(u)du}$, $c_2=-\frac{\int_{a}^{b}(u-a)^2p_1(u)w(u)du}{\int_{a}^{b}p_1(u)p_1(u)w(u)du}$,
	$c_3=c_2c_1-\frac{\int_{a}^{b}(u-a)^2w(u)du}{\int_{a}^{b}w(u)du}$ by  simple calculations.
	
	Denote $c_4=-\frac{\langle 1,1\rangle}{\langle 1,\chi\rangle}$, then  straight computations leads to
	\begin{equation}
	\begin{aligned}
	F_0&= \int_{a}^{b}\phi(u)w(u)du=  \int_{a}^{b}z(u)du=\Omega_0 ,\nonumber\\
	 F_1&= \int_{a}^{b}p_1(u)\phi(u)w(u)du= c_{1}\int_{a}^{b}z(u)du+\int_{a}^{b}\int_{s}^{b}z(u)duds=\Omega_1,\nonumber\\
	 F_2& = \int_{a}^{b}p_2(u)\phi(u)w(u)du= c_{3}\int_{a}^{b}z(u)du+c_{2}\int_{a}^{b}\int_{s}^{b}z(u)duds+2\int_{a}^{b}\int_{s}^{b}\int_{u}^{b}z(v)dvduds=\Omega_2,\nonumber\\
	 F_3& = \int_{a}^{b}p_3(u)\phi(u)w(u)du= \int_{a}^{b}z(u)du+c_4\int_{a}^{\xi}z(u)du=\Omega_3.
	\end{aligned}
	\end{equation}
	
	By Lemma \ref{Lemma2}, the inequality \eqref{a11} holds.

\end{proof}

Particularly, when $w(u)=1$, we have the following lemma.

\begin{lemma} \label{Lemma5}
{\rm\cite{LiuX}}
{Given  {an integrable function $z$} $:[a,b]\rightarrow \mathbb{R}^n$ and a matrix $R\in\mathbb{S}_n^{+}$, one has following:}
{\setlength\arraycolsep{2pt}
\begin{eqnarray}
\int_{a}^{b} z^{T}(u)Rz(u)du & \geq & \frac{1}{b-a}\omega_0^{T}R\omega_0+\frac{3}{b-a}\omega_{1}^{T}R\omega_{1}+\frac{5}{b-a}\omega_{2}^{T}R\omega_{2}\nonumber\\ & & +\frac{1}{b-a}\biggr[\omega_{3}-\frac{3}{2}\omega_{1}\biggr]^{T}R\biggr[\omega_{3}-\frac{3}{2}\omega_{1}\biggr]
\end{eqnarray}}
where
{\setlength\arraycolsep{2pt}
\begin{eqnarray}
\omega_0 & = & \int_{a}^{b}z(u)du,\nonumber\\
\omega_1 & = & \int_{a}^{b}z(u)du-\frac{2}{b-a}\int_{a}^{b}\int_{s}^{b}z(u)duds,\nonumber\\
\omega_2 & = & \int_{a}^{b}z(u)du-\frac{6}{b-a}\int_{a}^{b}\int_{s}^{b}z(u)duds+\frac{12}{(b-a)^2}\int_{a}^{b}\int_{s}^{b}\int_{v}^{b}z(u)dudvds,\nonumber\\
\omega_3 & = & \int_{a}^{\frac{a+b}{2}}z(u)du-\int_{\frac{a+b}{2}}^{b}z(u)du.\nonumber
\end{eqnarray}}
\end{lemma}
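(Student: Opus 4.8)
The plan is to obtain Lemma~\ref{Lemma5} as the special case $w(u)\equiv 1$ of Lemma~\ref{Lemma2}: once the weight is constant, the whole argument reduces to choosing the orthogonal system $\{p_0,p_1,p_2,p_3\}$ adapted to the flat weight on $[a,b]$ and then matching the functionals $F_i$ and the constants $q_i$, $q_{13}$ to the quantities $\omega_i$ and numerical coefficients in the statement. With $w\equiv 1$ the pairing $\langle p_i,p_j\rangle=\int_a^b p_i(u)p_j(u)\,du$ is the ordinary $L^2$ inner product, so the polynomial part of the system is just the first few shifted Legendre polynomials, while $p_3$ is the same indicator-based discontinuous function used in the proof of Lemma~\ref{Lemma4}.

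Concretely, I would take $p_0\equiv1$, $p_1(u)=(u-a)+c_1$ and $p_2(u)=(u-a)^2+c_2(u-a)+c_3$, fixing $c_1,c_2,c_3$ by the requirements $\int_a^b p_1\,du=\int_a^b p_2\,du=0$ and $\int_a^b p_1p_2\,du=0$; a short computation gives $c_1=-\tfrac{b-a}{2}$, $c_2=-(b-a)$, $c_3=\tfrac{(b-a)^2}{6}$, together with $q_0=b-a$, $q_1=\tfrac{(b-a)^3}{12}$ and $q_2=\tfrac{(b-a)^5}{180}$. For the discontinuous piece I would set $p_3=1-\tfrac{\langle 1,1\rangle}{\langle 1,\chi\rangle}\chi$, with $\chi$ the indicator of $[a,\xi]$ and $\xi$ determined by $\int_a^{\xi}p_2(u)\,du=0$. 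Since $p_2$ is symmetric about the midpoint, this condition forces $\xi=\tfrac{a+b}{2}$, whence $c_4=-\tfrac{\langle 1,1\rangle}{\langle 1,\chi\rangle}=-2$ and $p_3\equiv-1$ on $[a,\xi]$, $p_3\equiv+1$ on $(\xi,b]$; consequently $q_3=\int_a^b p_3^2\,du=b-a$ and $q_{13}=\int_a^b p_1p_3\,du=\tfrac{(b-a)^2}{4}$.

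The remaining task is to turn the defining integrals $F_i=\int_a^b p_i(u)z(u)\,du$ into the iterated-integral expressions $\omega_i$. Using Fubini to rewrite $\int_a^b (u-a)z(u)\,du=\int_a^b\int_s^b z(u)\,du\,ds$ and $\int_a^b (u-a)^2 z(u)\,du=2\int_a^b\int_s^b\int_v^b z(u)\,du\,dv\,ds$, one checks the identities $F_0=\omega_0$, $F_1=-\tfrac{b-a}{2}\,\omega_1$, $F_2=\tfrac{(b-a)^2}{6}\,\omega_2$ and $F_3=-\omega_3$. Substituting these into the first three terms of \eqref{a10} produces exactly the coefficients $\tfrac{1}{b-a}$, $\tfrac{3}{b-a}$, $\tfrac{5}{b-a}$ (for instance $\tfrac{1}{q_2}F_2^{T}RF_2=\tfrac{180}{(b-a)^5}\cdot\tfrac{(b-a)^4}{36}\,\omega_2^{T}R\omega_2=\tfrac{5}{b-a}\,\omega_2^{T}R\omega_2$), while $\tfrac{q_{13}}{q_1}=\tfrac{3}{b-a}$ gives $F_3-\tfrac{q_{13}}{q_1}F_1=-\bigl(\omega_3-\tfrac{3}{2}\omega_1\bigr)$ and hence, since $q_3=b-a$, the last term $\tfrac{1}{b-a}\bigl[\omega_3-\tfrac{3}{2}\omega_1\bigr]^{T}R\bigl[\omega_3-\tfrac{3}{2}\omega_1\bigr]$.

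I expect the only genuine labour to lie in the quadratic term: verifying the Fubini identity for $\int_a^b (u-a)^2 z\,du$, evaluating $q_2=\tfrac{(b-a)^5}{180}$, and confirming that the scalar $\tfrac{(b-a)^2}{6}$ linking $F_2$ to $\omega_2$ combines with $1/q_2$ to give precisely $\tfrac{5}{b-a}$; the determination of $\xi$ as the midpoint and the antisymmetry used for $q_{13}$ are the other spots needing care. As an alternative I could let $k\to0$ in Lemma~\ref{Lemma4}, since $w(u)=e^{-2k(u-b)}\to1$ and, e.g., $q_0=\tfrac{w-1}{2k}\to b-a$ and $\xi\to\tfrac{a+b}{2}$; this is conceptually shorter but replaces the clean polynomial algebra with Taylor expansions of all the constants $c_i,q_i$, so I would favour the direct specialization above.
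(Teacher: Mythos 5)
Your proposal is correct and matches the paper's intent: the paper states Lemma \ref{Lemma5} as a cited result and, in the remark following it, justifies it precisely as the specialization $w(u)\equiv 1$ of the weighted inequality, which is exactly the route you take. All of your constants check out ($c_1=-\tfrac{b-a}{2}$, $q_1=\tfrac{(b-a)^3}{12}$, $q_2=\tfrac{(b-a)^5}{180}$, $\xi=\tfrac{a+b}{2}$, $q_3=b-a$, $q_{13}=\tfrac{(b-a)^2}{4}$), as do the Fubini identities and the resulting coefficients $\tfrac{1}{b-a}$, $\tfrac{3}{b-a}$, $\tfrac{5}{b-a}$ and the factor $\tfrac{q_{13}}{q_1}F_1=-\tfrac{3}{2}\omega_1$.
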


\begin{remark}
	{By extending the integral inequality based on the auxiliary function in \cite{LiuX}, we propose a new weighted integral inequality in Lemma \ref{Lemma4}. Our main goal is to derive an  improved and less conservative criterion  for stability analysis of  time-delay neural networks.} As a special case, it can be found that when $w(u)\equiv1$, inequality in  Lemma \ref{Lemma4}  reduces to the inequality in Lemma \ref{Lemma5}.
\end{remark}

\begin{lemma} \label{Lemma6}
{\rm\cite{ParkP}} Given a matrix $R>0$,  for all continuous differentiable functions $x:[a,b]\rightarrow \mathbb{R}^{n}$,
one has the following inequalities:
{\setlength\arraycolsep{2pt}
\begin{eqnarray}
-\int^{b}_{a}\int^{b}_{s}\dot{x}^{T}(u)R\dot{x}(u)duds & \leq & -2\Omega_5^{T}R\Omega_5-4\Omega_6^{T}R\Omega_6,\nonumber\\
-\int^{b}_{a}\int^{s}_{a}\dot{x}^{T}(u)R\dot{x}(u)duds & \leq & -2\Omega_7^{T}R\Omega_7-4\Omega_8^{T}R\Omega_8.\nonumber
\end{eqnarray}}
where
{\setlength\arraycolsep{2pt}
\begin{eqnarray}
\Omega_5 & = & x(b)-\frac{1}{b-a}\int^{b}_{a}x(u)du,\nonumber\\
\Omega_6 & = & x(b)+\frac{2}{b-a}\int^{b}_{a}x(u)du-\frac{6}{(b-a)^2}\int^{b}_{a}\int^{b}_{s}x(u)duds,\nonumber\\
\Omega_7 & = & x(a)-\frac{1}{b-a}\int^{b}_{a}x(u)du,\nonumber\\
\Omega_8 & = & x(a)-\frac{4}{b-a}\int^{b}_{a}x(u)du+\frac{6}{(b-a)^2}\int^{b}_{a}\int^{b}_{s}x(u)duds.\nonumber
\end{eqnarray}}
\end{lemma}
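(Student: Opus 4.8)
The plan is to reduce each double integral to a single integral against a polynomial weight and then invoke the projection argument already used to establish Lemma \ref{Lemma2}. For the first inequality I would begin by exchanging the order of integration: since the integrand $\dot{x}^{T}(u)R\dot{x}(u)$ depends only on $u$, Fubini's theorem over the triangle $\{a\le s\le u\le b\}$ gives
$$\int_{a}^{b}\int_{s}^{b}\dot{x}^{T}(u)R\dot{x}(u)\,du\,ds=\int_{a}^{b}(u-a)\,\dot{x}^{T}(u)R\dot{x}(u)\,du.$$
This is exactly an integral of the form treated in Lemma \ref{Lemma2}, with $\phi=\dot{x}$ and weight $w(u)=u-a>0$ on $(a,b]$. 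The second inequality is the same with $w(u)=b-u$, arising from the complementary triangle $\{a\le u\le s\le b\}$.

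Next I would apply the two-term version of the projection argument behind Lemma \ref{Lemma2}. Only the basis $\{p_0\equiv 1,\ p_1\}$ is needed, where $p_1(u)=(u-a)+c_1$ is the linear polynomial made orthogonal to $p_0$ in the weighted inner product, i.e. $\int_a^b p_1(u)w(u)\,du=0$. Setting $e(u)=\dot{x}(u)-\frac{F_0}{q_0}-\frac{F_1}{q_1}p_1(u)$ and using $\int_a^b e^{T}Re\,w\,du\ge 0$ (valid since $R>0$) yields, exactly as in Lemma \ref{Lemma2},
$$\int_{a}^{b}w(u)\,\dot{x}^{T}(u)R\dot{x}(u)\,du\ \ge\ \frac{1}{q_0}F_0^{T}RF_0+\frac{1}{q_1}F_1^{T}RF_1,$$
where $F_i=\int_a^b p_i(u)\dot{x}(u)w(u)\,du$ and $q_i=\int_a^b p_i^2(u)w(u)\,du$.

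The remaining work is bookkeeping: evaluate $q_0,q_1,F_0,F_1$ and match them to the stated right-hand side. For $w(u)=u-a$ one finds $c_1=-\tfrac23(b-a)$, $q_0=\tfrac12(b-a)^2$, $q_1=\tfrac1{36}(b-a)^4$, and integration by parts (together with the further order-swap identity $\int_a^b(u-a)x(u)\,du=\int_a^b\int_s^b x(u)\,du\,ds$) gives $F_0=(b-a)\,\Omega_5$ and $F_1=\tfrac13(b-a)^2\,\Omega_6$. Substituting produces $\tfrac1{q_0}F_0^{T}RF_0=2\,\Omega_5^{T}R\Omega_5$ and $\tfrac1{q_1}F_1^{T}RF_1=4\,\Omega_6^{T}R\Omega_6$; negating the inequality gives the first claim. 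The second inequality follows identically with $w(u)=b-u$, where the analogous computation returns $c_1=-\tfrac13(b-a)$, the same $q_0,q_1$, and $F_0=-(b-a)\Omega_7$, $F_1=\tfrac13(b-a)^2\Omega_8$ (the signs of $F_0,F_1$ being immaterial, as they enter quadratically).

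I expect the only delicate point to be the constant-chasing in this last step: correctly tracking the endpoint terms from the integrations by parts and confirming that the orthogonality normalization forces precisely the coefficients $2$ and $4$, as well as the internal weights $2/(b-a)$ and $6/(b-a)^2$ appearing inside $\Omega_6$ and $\Omega_8$. The conceptual content, namely Fubini to expose the polynomial weight followed by orthogonal projection in the $R$-weighted inner product, is immediate from the machinery already assembled for Lemma \ref{Lemma2}.
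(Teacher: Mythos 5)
Your proof is correct: the paper itself gives no argument for this lemma (it is quoted from \cite{ParkP}), and your route --- Fubini to convert each double integral into a single integral with polynomial weight $u-a$ or $b-u$, followed by the weighted orthogonal-projection argument of Lemma \ref{Lemma2} with basis $\{1,p_1\}$ --- is exactly the auxiliary-function derivation that reference uses and that the paper's own machinery is built on. I checked the constants ($c_1=-\tfrac23(b-a)$ resp.\ $-\tfrac13(b-a)$, $q_0=\tfrac12(b-a)^2$, $q_1=\tfrac1{36}(b-a)^4$, $F_0=\pm(b-a)\Omega_{5/7}$, $F_1=\tfrac13(b-a)^2\Omega_{6/8}$) and they all come out right, yielding precisely the coefficients $2$ and $4$.
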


\bigskip

\section{Stability analysis}

{In this section, we prove our main result on exponential stability of \eqref{a6}.}

\begin{theorem} \label{Theorem1}
	
{For given positive constants $h$ and $\mu$, system (\ref{a6}) is
	globally exponentially stable with exponential convergence rate $k:0<k<min_{1\leq i\leq n}c_i$,
  and
  positive definite symmetric matrices
  $P\in \mathbb{R}^{3n\times 3n}$, $Q\in \mathbb{R}^{2n\times 2n}$, $U_i\in \mathbb{R}^{n\times n}$, $Z_i\in \mathbb{R}^{n\times n}$, $i=1,2$ $N_j\in \mathbb{R}^{n\times n}$, $M_j\in \mathbb{R}^{n\times n}$, $j=1,2$, positive definite diagonal matrices $D_i=diag\{d_{i1},\ldots,d_{in}\}\in \mathbb{R}^{n\times n}$, $R_i\in \mathbb{R}^{n\times n}$,  $i=1,2$, and any matrices $S\in \mathbb{R}^{3n\times 3n}$ that fulfill the following LMIs:}
	$$\Phi+\Theta_1<0,\quad \Phi+\Theta_2<0, \quad \Gamma>0$$
	where\\
	$\Phi=\Xi_1+\Xi_2+\Xi_3+\Xi_4+\Xi_5+\Psi+\Pi$, \quad
	$\Theta_1=\varphi_1+\varphi_2,\quad \Theta_2=\psi_1+\psi_2, \nonumber\\ e_i=\biggl[\underbrace{0,0,\ldots,\overbrace{I}^i,\ldots,0}_{12}\biggl]^{T}_{12n\times n}, i= 1,2,\ldots,12,$
{	$e_s=[-C,0_{n\times 2n},A,B,0_{n\times 7n}]^T,$}\nonumber\\
	$P=\left[\begin{array}{ccc}P_{11} & P_{12} & P_{13} \\
	\ast & P_{22} & P_{23} \\
	\ast & \ast & P_{33}\end{array} \right],
	\Gamma=\left[\begin{array}{ccc}Z_{11} & S \\
	S & Z_{12}\end{array}\right],
	\Omega=\left[\begin{array}{ccc}Z_{13} & S \\
	S & Z_{13}\end{array}\right],$\nonumber\\
	$Z_{11}=diag\{Z_1+N_1, 3(Z_1+N_1), 5(Z_1+N_1)\},$\nonumber\\
	$Z_{12}=diag\{Z_1+N_2, 3(Z_1+N_2), 5(Z_1+N_2)\},\,Z_{13}=diag\{Z_1,3Z_1,5Z_1\},$\nonumber\\
	$Z_{14}=diag\{\frac{h}{q_0}Z_3,\frac{h}{q_1}Z_3,\frac{h}{q_2}Z_3\},\,N_{14}=diag\{N_1,3N_1,5N_1\},\,N_{15}=diag\{N_2,3N_2,5N_2\},$\nonumber\\
	$\gamma(1)=[(e_1-e_2),\,(e_1+e_2-2e_7),\,(e_1-e_2+6e_7-6e_{10})],$\nonumber\\
	$\gamma(2)=[(e_2-e_3),\,(e_2+e_3-2e_8),\,(e_2-e_3+6e_8-6e_{11})],$\nonumber\\
	$\gamma(3)=[(e_1-e_3),\,((h+c_1)e_1-c_1e_3-he_6),\,((h^2+c_2h+c_3)e_1-c_3e_3-c_2he_6-h^2e_{9})],$\nonumber\\
	$\gamma=[\gamma(1),\,\gamma(2)], \ \zeta(1)=[e_1,\,he_7,\,he_{9}],\quad \zeta(2)=[e_1,\,he_8,\,he_{9}],$\nonumber\\
	$\zeta(3)=[e_s,\,e_1-e_3,\,2(e_1-e_6)],\quad \zeta(4)=[e_1,\,he_6,\,he_{9}],$\nonumber\\
	$\Xi_1=sym\{k\zeta(4)P\zeta^T(4)\,+\,2k[e_4D_1e_1^T\,+\,(e_1L\,-\,e_4)D_2e_1^T]\,+\,e_4D_1e_s^T+\,(e_1L-e_4)D_2e_s^T\},$\nonumber\\
	$\Xi_2=e^{2kh}\{[e_1,\,e_4]Q[e_1,\,e_4]^T\,+\,e_1U_1e_1^T\,+\,e_1U_2e_1^T\}\,-\,(1\,-\,\mu) [e_2,\,e_5]Q[e_2,\,e_5]^T\,-\,e^{2k(h-\xi)}[e_{12}U_2e_{12}^T-e_{12}U_3e_{12}^T]\,-\,[e_3U_1e_3^T\,+\,e_3U_3e_3^T],$\nonumber\\
	$\Xi_3=h^2(e_sZ_1e_s^T\,+\,e_1Z_2e_1^T\,+\,e_sZ_3e_s^T)\,-\,\biggr[\frac{h^3}{q_0}e_6Z_2e_6^T+\,\frac{h^5}{4q_1}(\frac{2c_1}{h}e_6\,+\,e_{9})Z_2(\frac{2c_1}{h}e_6\,+\,e_{9})^T\,$\nonumber\\
	$+\,\gamma(3)Z_{14}\gamma^T(3)+\frac{h}{q_3}\,\big((1-\frac{(h+c_1)q_{13}}{q_1})e_1-(1+c_4-\frac{c_1q_{13}}{q_1})e_3+\frac{q_{13}}{q_1}e_6+c_4e_{12}\big)Z_3\big((1-\frac{(h+c_1)q_{13}}{q_1})e_1-(1+c_4-\frac{c_1q_{13}}{q_1})e_3+\frac{q_{13}}{q_1}e_6+c_4e_{12}\big)^T\biggr],$\nonumber\\
	$\Xi_4=\frac{h^2}{2}e_sN_1e_s^T\,+\,\frac{h^2}{2}e_sN_2e_s^T\,-\,e^{-2kh}\biggr[2(e_1-e_7)N_1(e_1-e_7)^T+\,4(e_1\,+\,2e_7\,-\,3e_{10})N_1(e_1\,+\,2e_7\,-\,3e_{10})^T\,+\,2(e_2-e_8)N_1(e_2-e_8)^T+\,4(e_2\,+\,2e_8\,-\,3e_{11})N_1(e_2\,+\,2e_8\,-\,3e_{11})^T\,+\,2(e_2-e_7)N_2(e_2-e_7)^T+\,4(e_2\,-\,4e_7\,+\,3e_{10})N_2(e_2\,-\,4e_7\,+\,3e_{10})^T\,+\,2(e_3-e_8)N_2(e_3-e_8)^T$\nonumber\\
	$+\,4(e_3\,-\,4e_8\,+\,3e_{11})N_2(e_3\,-\,4e_8\,+\,3e_{11})^T\biggr],$\nonumber\\
	$\Xi_5=\frac{\mu}{h}e_1(M_1{-M_2})e_1^T,$
	$\Psi=-e^{-2kh}\gamma\Omega\gamma^T,$\nonumber\\
	$\Pi=sym(e_1LR_1e_4^T\,-\,e_4R_1e_4^T\,+\,e_2LR_2e_5^T\,-\,e_5R_2e_5^T),$\nonumber\\
	$\varphi_1=sym(\zeta(1)P\zeta^T(3)),\quad\varphi_2=sym(ke_1M_1e_1^T\,+\,e_1M_1e_s^T),$\nonumber\\
	$\psi_1=sym(\zeta(2)P\zeta^T(3)),\quad\psi_2=sym(ke_1M_2e_1^T\,+\,e_1M_2e_s^T),$\nonumber\\
	$\alpha=\frac{h(t)}{h},\quad\beta=\frac{h-h(t)}{h},\quad L=diag\{L_1,\ldots,L_n\}.$\nonumber\\
\end{theorem}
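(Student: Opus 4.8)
The plan is to build an exponentially weighted Lyapunov--Krasovskii functional (LKF) $V(t)$ whose inflated derivative $\dot V(t)+2kV(t)$ is bounded above by the quadratic form $\eta^T(t)\,\Sigma\,\eta(t)$ controlled by the matrices $\Phi,\Theta_1,\Theta_2$ (with $\Gamma>0$ a feasibility condition), where $\eta(t)\in\mathbb{R}^{12n}$ is the augmented state with $e_i^T\eta(t)$ its $i$-th block and $e_s^T\eta(t)=\dot z(t)$ by \eqref{a6}. Concretely I would set $V=\sum_{i=1}^{5}V_i$, with $V_1=\eta^T(t)\,\zeta(4)P\zeta^T(4)\,\eta(t)$ built from $z(t)$ together with its weighted single and double integrals over $[t-h,t]$; $V_2$ the $Q$- and $U_i$-terms, i.e.\ integrals of $[z,f(z)]Q[z,f(z)]^T$ over $[t-h(t),t]$ and of $z^TU_iz$ over $[t-h,t]$ with the $\xi$-refinement, each carrying the weight $e^{2k(s-t)}$; $V_3,V_4$ the double-integral energy terms $\int\!\int e^{2k(s-t)}\dot z^TZ\dot z$ and $\int\!\int e^{2k(s-t)}\dot z^TN\dot z$; and $V_5$ the activation terms $2\sum_jd_{ij}\!\int_0^{z_j}\!f_j$ matching $D_1,D_2$ plus the $h(t)$-dependent $M_1,M_2$ term producing $\Xi_5$. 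The weights $e^{2k(s-t)}$ are chosen precisely so that the extra factors created when differentiating them are absorbed on forming $\dot V+2kV$; the constraint $0<k<\min_ic_i$ is what keeps these weighted terms, and the constants $q_0,\dots,q_3$ of Lemma~\ref{Lemma4}, well defined and the functional coercive.

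First I would establish the two-sided bound required by Definition~\ref{Definition1}. Positivity $V(t)\ge c_1\|z(t)\|^2$ follows from $P,Q,U_i,Z_i>0$, from the nonnegativity of the activation integrals (guaranteed by the sector bound \eqref{a7}, which makes both $\int_0^{z_j}\!f_j$ and $\int_0^{z_j}(L_js-f_j)\,ds$ nonnegative), and from $\Gamma>0$ used to keep the combined energy terms positive; an upper bound $V(0)\le c_2\phi^2$ is immediate. Granting the decay $\dot V+2kV\le0$ proved below, integration gives $V(t)\le V(0)e^{-2kt}$, whence $\|z(t)\|\le\sqrt{c_2/c_1}\,\phi\,e^{-kt}$, i.e.\ exponential stability with rate $k$ and $H=\sqrt{c_2/c_1}$.

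The heart of the proof is the estimate of $\dot V(t)+2kV(t)$, which after differentiation consists of boundary quadratics in $\eta(t)$ together with several negative integral terms. For the weighted single integrals $-\int_{t-h}^te^{2k(u-t)}\dot z^T(u)Z_j\dot z(u)\,du$ I would apply the new inequality Lemma~\ref{Lemma4} on $[a,b]=[t-h,t]$; this is exactly the step that manufactures the constants $q_0,q_1,q_2,q_3,q_{13}$, the auxiliary point $\xi$ and the extra block $e_{12}$, hence the whole $\Xi_3$ term. The double-integral terms associated with $N_1,N_2$ are bounded by first pulling out $e^{2k(u-t)}\ge e^{-2kh}$ on $[t-h,t]$ and then invoking Lemma~\ref{Lemma6}, which yields $\Xi_4$, while the sector condition \eqref{a7} supplies the nonnegative slack $\Pi$ through $R_1,R_2$. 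Finally, splitting $[t-h,t]=[t-h,t-h(t)]\cup[t-h(t),t]$ and recombining the two sub-interval bounds through the reciprocally convex Lemma~\ref{Lemma1}, with weights $\alpha=h(t)/h$ and $\beta=(h-h(t))/h$ and feasibility certificate $\Gamma>0$, produces the combined term $\Psi$.

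Collecting all contributions gives $\dot V(t)+2kV(t)\le\eta^T(t)\,\Sigma\,\eta(t)$ with $\Sigma=\Phi+(\text{case-dependent terms})$. The derivative $\dot h(t)$ in the $Q$-term is bounded directly via $\dot h\le\mu$, which is why $\mu$ already sits inside $\Phi$ (through $\Xi_2$ and $\Xi_5$); the remaining, genuinely case-dependent contributions of the augmented $P$-cross-term and of the $M_1,M_2$ term are affine in the delay parameters and are collected into $\Theta_1=\varphi_1+\varphi_2$ and $\Theta_2=\psi_1+\psi_2$. By the usual convexity argument, $\Sigma<0$ over the whole admissible range of $h(t)\in[0,h]$ and $\dot h(t)\in[-\mu,\mu]$ is equivalent to negativity at the two extreme configurations, i.e.\ to the stated $\Phi+\Theta_1<0$ and $\Phi+\Theta_2<0$; together with the coercivity established above this yields the exponential bound. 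I expect the principal obstacle to be the faithful execution of the Lemma~\ref{Lemma4} step: since $p_3$ is discontinuous at $\xi$ (fixed by $\int_a^\xi p_2(u)w(u)\,du=0$), one must carry the block $e_{12}$ through the algebra, confirm that the coefficients produced coincide with the explicit constants $q_i,q_{13}$ of Lemma~\ref{Lemma4}, and verify that the entire estimate stays affine in $h(t)$ and $\dot h(t)$ so that the reduction to only two vertex LMIs is rigorous.
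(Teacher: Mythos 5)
Your proposal matches the paper's proof in all essential respects: the same five-part LKF (with the $D_1,D_2$ activation integrals and the $h(t)$-dependent $M_1,M_2$ term merely regrouped among the $V_i$), Lemma~\ref{Lemma4} applied to the weighted $Z_2,Z_3$ integrals over $[t-h,t]$, Lemma~\ref{Lemma6} after pulling out $e^{-2kh}$ for the $N_1,N_2$ terms, the reciprocally convex Lemma~\ref{Lemma1} with certificate $\Gamma>0$ producing $\Psi$, the sector slack $\Pi$, and the convexity reduction to the two vertex LMIs followed by the two-sided bound on $V$. The only differences are cosmetic: the paper carries the factor $e^{2kt}$ inside $V$ and shows $\dot V\le 0$ rather than $\dot V+2kV\le 0$ for an unweighted functional, and note that $\Gamma>0$ is needed only for the reciprocally convex recombination, not (as you suggest) for the coercivity of $V$, which already follows from $P,Q,U_i,Z_i,N_i,M_i>0$ and the sector condition \eqref{a7}.
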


\begin{proof}
	Consider the following LKF
	$$V(x(t))=\sum_{i=1}^5V_i(x(t))$$
	where
	\begin{equation}
	\begin{array}{l}
	V_1(x(t))=e^{2kt}\alpha^T(t)P\alpha(t)+2\sum^n_{i=1}e^{2kt}d_{1i}\int_{0}^{z_i}f_i(s)ds
	+2\sum^n_{i=1}e^{2kt}d_{2i}\int_{0}^{z_i}(L_is-f_i(s))ds,\nonumber\\
	V_2(x(t))=e^{2kh}\biggr\{\int_{t-h(t)}^{t}e^{2ks}\varepsilon^T(s)Q\varepsilon(s)ds+\int_{t-h}^{t}e^{2ks}z^T(s)U_1z(s)ds+
	\int_{t-\xi}^{t}e^{2ks}z^T(s)U_2z(s)ds\nonumber\\
	~~~~~~~~~~~~~~+\int_{t-h}^{t-\xi}e^{2ks}z^T(s)U_3z(s)ds\biggr\},\nonumber\\
	V_3(x(t))=h\biggr\{\int_{-h}^{0}\int_{t+u}^{t}e^{2ks}\dot{z}(s)^TZ_1\dot{z}(s)dsdu+\int_{-h}^{0}\int_{t+u}^{t}e^{2ks}z(s)^TZ_2z(s)dsdu\nonumber\\
	~~~~~~~~~~~~~~+\int_{-h}^{0}\int_{t+u}^{t}e^{2ks}\dot{z}(s)^TZ_3\dot{z}(s)dsdu\biggr\},\nonumber\\
	V_4(x(t))= \int_{-h}^{0}\int_{v}^{0}\int_{t+u}^{t}e^{2ks}\dot{z}^T(s)N_1\dot{z}(s)dsdudv+ \int_{-h}^{0}\int_{-h}^{v}\int_{t+u}^{t}e^{2ks}\dot{z}^T(s)N_2\dot{z}(s)dsdudv,\nonumber\\
	V_5(x(t))= \frac{h(t)}{h}e^{2kt}z^T(t)M_1z(t)+\frac{h-h(t)}{h}e^{2kt}z^T(t)M_2z(t).\nonumber
	\end{array}
	\end{equation}
	Let
	$\eta^T(t) = [z^T(t),\,z^T(t-h(t)),\,z^T(t-h),\,f^T(z(t)),\,f^T(z(t-h(t)),
	\ \frac{1}{h}\int_{t-h}^{t}z^T(s)ds,\\\,\frac{1}{h(t)}\int_{t-h(t)}^{t}z^T(s)ds,\,\frac{1}{h-h(t)}\int_{t-h}^{t-h(t)}z^T(s)ds,\,
	\frac{2}{h^2}\int_{-h}^{0}\int_{t+u}^{t}z^T(s)dsdu,\,\frac{2}{h^2(t)}\int_{-h(t)}^{0}\int_{t+u}^{t}z^T(s)dsdu,\,\nonumber\\
	\frac{2}{(h-h(t))^2}\int_{-h}^{-h(t)}\int_{t+u}^{t-h(t)}z^T(s)dsdu,\,z^T(t-\xi)],$ $
	\alpha^T(t)  =  [z^T(t),\,\int_{t-h}^{t}z^T(s)ds,\,\frac{2}{h}\int_{-h}^{0}\int_{t+u}^{t}z^T(s)dsdu],$ $
	\varepsilon^T(t)  = [z^T(t),\,f^T(z(t))].
	$
	In the following, we estimate time derivative of $V_i(z(t)),\quad i=1,2,3,$ along trajectories of (\ref{a1}).
    {The following of three estimates are similar to those in \cite{LiuX}
    but we still give some critical steps for the completeness of our presentation:}
    \begin{equation}
	\begin{aligned}
	\dot{V}_1(z(t))\leq&
	e^{2kt}\eta^T(t)[\Xi_1+\alpha\varphi_1+\beta\psi_1]\eta(t) \nonumber\\
	\dot{V}_2(z(t))  =&  e^{2kh}\bigg[e^{2kt}\epsilon^T(t)Q\epsilon(t)-e^{2k(t-h(t))}(1-{h'(t)})\epsilon^T(t-h(t))Q\epsilon(t-h(t))
	+e^{2kt}z^T(t)U_1z(t)\nonumber\\ &-e^{2k(t-h)}z^T(t-h)U_1z(t-h)+e^{2kt}z^T(t)U_2z(t)
	-e^{2k(t-\xi)}z^T(t-\xi)U_2z(t-\xi)\nonumber\\ &+e^{2k(t-\xi)}z^T(t-\xi)U_3z(t-\xi)
	-e^{2k(t-h)}z^T(t-h)U_3z(t-h)\bigg]
	\nonumber\\  \leq  &e^{2kt}\eta^T(t)\bigg\{e^{2kh}[e_1,e_4]Q[e_1,e_4]^T-(1-\mu)[e_2,e_5]Q[e_2,e_5]^T
	+e^{2kh}e_1U_1e_1^T-e_3U_1e_3^T\nonumber\\ &+e^{2kh}e_1U_2e_1^T-e^{2k(h-\xi)}e_{12}U_2e_{12}^T
	+e^{2k(h-\xi)}e_{12}U_3e_{12}^T-e_3U_3e_3^T\bigg\}
	\nonumber\\
	=&  e^{2kt}\eta^T(t)\Xi_2\eta(t) \nonumber\\
\end{aligned}
	\end{equation}
\begin{eqnarray}\nonumber
		\dot{V}_4(z(t)) & = & \frac{h^2}{2}e^{2kt}\dot{z}^T(t)(N_1+N_2)\dot{z}(t)-\int_{-h}^{0}\int_{t+u}^{t}e^{2ks}\dot{z}^T(s)N_1\dot{z}(s)dsdu\nonumber\\
		& & -\int_{-h}^{0}\int_{t-h}^{t+u}e^{2ks}\dot{z}^T(s)N_2\dot{z}(s)dsdu\nonumber\\
		& \leq & \frac{h^2}{2}e^{2kt}\dot{z}^T(t)(N_1+N_2)\dot{z}(t)-e^{2k(t-h)}\int_{-h}^{0}\int_{t+u}^{t}\dot{z}^T(s)N_1\dot{z}(s)dsdu\nonumber\\
		& & -e^{2k(t-h)}\int_{-h}^{0}\int_{t-h}^{t+u}\dot{z}^T(s)N_2\dot{z}(s)dsdu\nonumber\\
		&\leq & e^{2kt}\eta^T(t)\Bigg\{\Xi_4-e^{-2kh}\bigg[\Big(\frac1\alpha-1\Big)\gamma(1)N_{14}\gamma^T(1)+\Big(\frac1\beta-1\Big)\gamma(2)N_{15}\gamma^T(2)\bigg]\Bigg\}\nonumber.
		\end{eqnarray}
We use our novel inequalities in Lemma \ref{Lemma4} to estimate $\dot{V}_3$. To this end, we write
    \begin{equation}
    \begin{aligned}
	\dot{V}_3(z(t))  =&  e^{2kt}\bigg[h^2\dot{z}^T(t)(Z_1+Z_3)\dot{z}(t)+h^2z^T(t)Z_2z(t)
	-h\int_{t-h}^{t}e^{2k(s-t)}z^T(s)Z_2z(s)ds\nonumber\\ &-h\int_{t-h}^{t}e^{2k(s-t)}\dot{z}^T(s)Z_3\dot{z}(s)ds\bigg]
	-h\int_{t-h}^{t}e^{2ks}\dot{z}^T(s)Z_1\dot{z}(s)ds\nonumber
	\end{aligned}
	\end{equation}

	Similar to  \cite{LiuX}, by using Lemma \ref{Lemma5},
	\begin{equation}
	\begin{aligned}
	-h\int_{t-h}^{t}e^{2ks}\dot{z}^T(s)Z_1\dot{z}(s)ds
	\leq e^{2k(t-h)}\eta^T(t)\bigg\{\frac{1}{\alpha}\gamma(1)Z_{13}\gamma^T(1)+\frac{1}{\beta}\gamma(2)Z_{13}\gamma^T(2)\bigg\}\eta(t).\nonumber
	\end{aligned}
	\end{equation}
	We next make use of {\eqref{a11}} to get that
	\begin{equation}
	\begin{aligned}
	-h\int_{t-h}^{t}&e^{2k(s-t)}z^T(s)Z_2z(s)ds \nonumber\\
	& \leq -\frac{h}{q_0}\Bigg[\int^t_{t-h}z^T(s)ds\Bigg]Z_2\Bigg[\int^t_{t-h}z(s)ds\Bigg] -\frac{h}{q_1}[c_1\int^t_{t-h}z(s)ds+\int^0_{-h}\int^t_{t+u}z(s)dsdu]^T\nonumber\\
	&~~\times Z_2 [c_1\int^t_{t-h}z(s)ds+\int^0_{-h}\int^t_{t+u}z(s)dsdu]\nonumber\\
	& = -\eta^T(t)\bigg\{\frac{h^3}{q_0}e_6Z_2e_6^T+\frac{h^5}{4q_1}(\frac{2c_1}{h}e_6+e_{9})Z_2(\frac{2c_1}{h}e_6+e_{9})^T\bigg\}\eta(t)\nonumber
	\end{aligned}
	\end{equation}
\begin{equation}
	\begin{aligned}
	-h\int_{t-h}^{t}&e^{2k(s-t)}\dot{z}^T(s)Z_3\dot{z}(s)ds\nonumber\\
	&  \leq
	-\frac{h}{q_0}[z(t)-z(t-h)]^TZ_3[z(t)-z(t-h)] -\frac{h}{q_1}\bigg[(c_1+h)z(t)-c_1z(t-h)-\int^t_{t-h}z(s)ds\bigg]^T\nonumber\\
	& ~~\times Z_3 \bigg[(c_1+h)z(t)-c_1z(t-h)-\int^t_{t-h}z(s)ds\bigg]\nonumber\\
	&~~ -\frac{h}{q_2}\bigg[(h^2+c_2h+c_3)z(t)-c_3z(t-h)-c_2\int^t_{t-h}z(s)ds-2\int^0_{-h}\int^t_{t+u}z(s)dsdu\bigg]^TZ_3\nonumber\\
	&~~\times \bigg[(h^2+c_2h+c_3)z(t)-c_3z(t-h)-c_2\int^t_{t-h}z(s)ds-2\int^0_{-h}\int^t_{t+u}z(s)dsdu\bigg]\nonumber\\
	&~~ -\frac{h}{q_3}\bigg[(1-\frac{(h+c_1)q_{13}}{q_1})z(t)-(1+c_4-\frac{c_1q_{13}}{q_1})z(t-h) +c_4z(t-\xi)+\frac{q_{13}}{q_1}\int^t_{t-h}z(s)ds\bigg]^T\nonumber\\
	&~~\times Z_3\bigg[(1-\frac{(h+c_1)q_{13}}{q_1})z(t) -(1+c_4-\frac{c_1q_{13}}{q_1})z(t-h)+c_4z(t-\xi)+\frac{q_{13}}{q_1}\int^t_{t-h}z(s)ds\bigg]\nonumber\\
	& = \eta^T(t)\Bigg\{\gamma(3)Z_{14}\gamma^T(3)+\bigg[(1-\frac{(h+c_1)q_{13}}{q_1})e_1-(1+c_4-\frac{c_1q_{13}}{q_1})e_3 +\frac{q_{13}}{q_1}e_6+c_4e_{12}\bigg]^T\nonumber\\
	&~~\times Z_3\bigg[(1-\frac{(h+c_1)q_{13}}{q_1})e_1-(1+c_4-\frac{c_1q_{13}}{q_1})e_3 +\frac{q_{13}}{q_1}e_6+c_4e_{12}\bigg]\Bigg\}\eta(t).\nonumber
	\end{aligned}
	\end{equation}
	Consequently
	\setlength\arraycolsep{2pt}
		\begin{eqnarray}\nonumber
		\dot{V}_3(z(t)) & \leq & e^{2kt}\eta^T(t)\Bigg\{\Xi_3-e^{-2kh}\bigg[\frac{1}{\alpha}\gamma(1)Z_{13}\gamma^T(1)
		+\frac{1}{\beta}\gamma(2)Z_{13}\gamma^T(2)\bigg]\Bigg\}\eta(t).\nonumber
        \end{eqnarray}
Noting that  $f'$ may have sign changes, different from \cite{LiuX}, we estimate the time derivative of $V_5(z(t))$ as
	\begin{eqnarray}
	\dot{V}_5(z(t))& = & e^{2kt}\alpha\big[2kz^T(t)M_1z(t)+2z^T(t)M_1\dot{z}(t)\big]+\frac{\dot{h}(t)}{h}e^{2kt}z^T(t)M_1z(t)\nonumber\\
	& &
	+e^{2kt}\beta\big[2kz^T(t)M_2z(t)+2z^T(t)M_2\dot{z}(t)\big]-\frac{\dot{h}(t)}{h}e^{2kt}z^T(t)M_2z(t)\nonumber\\
	& \leq & e^{2kt}\eta^T(t)\big\{\alpha sym(ke_1M_1e_1^T+e_1M_1e_s^T)+\beta sym(ke_1M_2e_1^T+e_1M_2e_s^T)+\frac{\mu}{h}e_1(M_1{-M_2})e_1^T\big\}\eta(t)\nonumber\\
	& = &  e^{2kt}\eta^T(t)\{\Xi_5+\alpha\varphi_2+\beta\psi_2\}\eta(t).\nonumber
	\end{eqnarray}
{By considering the assumptions {\eqref{a4}} at $z(t)$ and $z(t-h(t))$,}
for any diagonal matrices $\,\,R_1>0,\,\,R_2>0$, we have
	{\setlength\arraycolsep{2pt}
		\begin{eqnarray}\label{R-inequality}
		0 & \leq & 2e^{2kt}[z^T(t)LR_1{f}(z(t))-f^T(z(t))R_1f(z(t))
\nonumber\\		& & +z^T(t-h(t)) LR_2{f}(z(t-h(t)))-f^T(z(t-h(t)))R_2f(z(t-h(t)))]\nonumber\\
		& = & e^{2kt}\eta^T(t)\Pi\eta(t).
		\end{eqnarray}}
	Using {lemma \ref{Lemma1}}, we have
	{\setlength\arraycolsep{2pt}
		\begin{eqnarray}
		& -e^{-2kh}\eta^T(t)\Bigg\{\frac{1}{\alpha}\gamma(1)Z_{13}\gamma^T(1)+\frac{1}{\beta}\gamma(2)Z_{13}\gamma^T(2)
		& \frac{1}{\alpha}\gamma(1)N_{14}\gamma^T(1)+\frac{1}{\beta}\gamma(2)N_{15}\gamma^T(2)\nonumber\\
		& -\gamma(1)N_{14}\gamma^T(1)-\gamma(2)N_{15}\gamma^T(2)\Bigg\}\eta(t)\nonumber\\
		&\leq  \eta^T(t)\big\{-e^{-2kh}\gamma\Omega\gamma^T\big\}\eta(t)=\eta^T(t)\Psi\eta(t).\nonumber
		\end{eqnarray}}
	Hence,$\,\,\dot{V}(z(t))\leq e^{2kt}\eta^T(t)\{\Phi+\alpha\Theta_1+\beta\Theta_2\}\eta(t).$
	Since $\Phi+\Theta_1<0,\,\Phi+\Theta_2<0$ and $\alpha+\beta=1$, we can get $\Phi+\alpha\Theta_1+\beta\Theta_2<0$,
	then for any $\eta(t)\neq0$ we have $\dot{V}(z(t))<0$ .\\
	One can easily check that,
	$$V(z(0))\leq\Lambda\|\phi\|^2,$$
	and\\
	{\setlength\arraycolsep{2pt}
		\begin{eqnarray}
		\Lambda & = & \lambda_{max}(P)(1+2h^2)+2\lambda_{max}(D_1L)+2\lambda_{max}(D_2L)+he^{2kh}\lambda_{max}(Q)\nonumber\\
		& & \times[1+\lambda_{max}(L^2)]+he^{2kh}(\lambda_{max}(U_1)+\lambda_{max}(U_2)+\lambda_{max}(U_3))\nonumber\\
		& & +\Bigg[\frac{h^3}{2}\lambda_{max}(Z_1)+\frac{h^3}{2}\lambda_{max}(Z_3)+\frac{h^3}{6}\lambda_{max}(N_1)+\frac{h^3}{2}\lambda_{max}(N_2)\Bigg]\nonumber\\
		& &
		\times \big[\lambda_{max}(C^TC)+\lambda_{max}(A^TA)\lambda_{max}(L^2)+\lambda_{max}(B^TB)\lambda_{max}(L^2)\big]\nonumber\\
		& &
		+h\lambda_{max}(M_1+M_2)+\frac{h^3}{2}\lambda_{max}(Z_2).\nonumber
		\end{eqnarray}}
	At the same time, we have
	$$V(z(t))\geq e^{2kt}\alpha^T(t)P\alpha(t)\geq e^{2kt}\lambda_{max}(P)\|\alpha(t)\|^2\geq e^{2kt}\lambda_{max}(P)\|z(t)\|^2.$$
	Therefore,
	$$\|z(t)\|\leq\sqrt{\frac{\Lambda}{\lambda_{max}(P)}}\|\phi\|e^{-kt},$$
	{which completes the proof.}
\end{proof}
\begin{remark}
 {In \cite{LiuX}, when analysing $V_5$,  it was assumed that  $\dot{h}(t)\ge0$.
  We do not impose this restriction in our proof.
  Furthermore, in the inequality \eqref{R-inequality} for the  activation function,
  we only consider relation between $z(t)$, $f(z(t))$ and $z(t-h(t))$,   $f(z(t-h(t))$,
  but remove the relation between $f(z(t-h)$, $z(t-h)$ which was included in the analysis of  \cite{LiuX}.
  Numerical simulation shows that this will not affect the performance of the stability criterion
  while reducing its number of decision variables.}

\end{remark}

\bigskip
\section{Numerical experiments}
{We now test three examples along with their simulations to show the advantages of the obtained results.}

{\bf Example 1}  {\rm\cite{WuM,JiM1,JiM2,LiuX}} Consider the delayed neural network \eqref{a6} with:
\setlength{\abovedisplayskip}{10pt}
\begin{equation}\nonumber
\begin{array}{l}
A=\left[\begin{matrix}
-1& 0.5 \\
0.5& -1
\end{matrix}\right],\ \
B=\left[
\begin{matrix}
-0.5& 0.5 \\
0.5 & 0.5
\end{matrix}
\right],\ \ C=\diag\{2,3.5\},
\ \ L_1=1,\ \ L_2=1.
\end{array}
\end{equation}


{For various $\mu$ and $h=1$, the maximal value for  allowable exponential convergence rate $k$
of the system are recorded in Table \ref{table1}.
From the table, one can notice that our criterion is more effective than the those in \cite{WuM,JiM1,JiM2,LiuX}.}


\begin{table}[H]
	\caption{Allowable values of $k$ for different $\mu$ and $h=1$ (Example 1).}\label{table1}
	\setlength{\tabcolsep}{7mm}
	\centering
	\begin{tabular}{lcccccc}
		\hline
		$\mu$ &0   &  0.8   &0.9 & NoDVs  \\ \hline
		\cite{WuM} &1.15& 0.8643& 0.8344 & $3n^2+12n$  \\
		\cite{JiM1}&1.1540& 0.8696& 0.8354 & $13n^2+6n$  \\
		\cite{JiM2} &1.1544& 0.8784& 0.8484&  $7n^2+8n$ \\
		\cite{LiuX} &1.2147 &0.9382& 0.9104 & ${20.5n^2 + 12.5n}$ \\
Theorem \ref{Theorem1} &1.2477 &1.0299 &1.0115 &${20.5n^2 + 11.5n}$  \\  \hline
	\end{tabular}
\end{table}


{\bf Example 2}   The delayed neural network \eqref{a6} having the following matrices were studied in {\cite{WuM,ZhengC,JiM1,JiM2,LiuX}}:
\setlength{\abovedisplayskip}{10pt}
\begin{equation}\nonumber
\begin{array}{l}
A=\left[\begin{matrix}
-0.0373& 0.4852& -0.3351&0.2336 \\
-1.6033&0.5988&-0.3224&1.2352\\
0.3394& -0.0860&-0.3824& -0.5785\\
-0.1311&0.3253&-0.9534&-0.5015
\end{matrix}\right],\ \
B=\left[
\begin{matrix}
0.8674&-1.2405&-0.5325&-0.0220\\
0.0474&-0.9164&0.0360&0.9816\\
1.8495&2.6117&-0.3788&0.0824\\
-2.0413&0.5179&1.1734&-0.2775
\end{matrix}
\right],\\

\ \ C=\diag\{1.2769, 0.6231, 0.9230, 0.4480\},\\
\ \ L_1=0.1137,\ \ L_2=0.1279,\ \ L_3=0.7994,\ \ L_4=0.2368.
\end{array}
\end{equation}

{For this example, as in \cite{LiuX}, we make a comparison with the methods proposed in \cite{WuM,ZhengC,JiM1,JiM2,LiuX} by taking $k=10^{-6}$. For different  $\mu$, the maximal upper bounds of $h(t)$ with corresponding NoDVs are showed in  Table \ref{table2}. From the reuslt, we can see the improvement  of our method.}


{Fig. \ref{figure2} depicts the trajectory of the delayed system \eqref{a6} when $z(0)=[-1,-0.5,0.5,1]^T,$ $ h(t)=2.8674+0.8sin(t),$ $f(z(t))=[0.1137tanh(z_1(t)),0.1279tanh(z_2(t)), 0.7994tanh(z_3(t)),\\0.2368tanh(z_4(t))].$ }


\begin{table}[H]
	\caption{Allowable $h$ for various $\mu$  (Example 2).}\label{table2}
	\setlength{\tabcolsep}{6mm}
	\centering
	\begin{tabular}{lcccccc}
		\hline
		$\mu$ &0.5   &  0.8   &0.9 & NoDVs  \\ \hline
		\cite{WuM} &2.5379 &2.1766& 2.0853& $3n^2+12n$  \\
		\cite{ZhengC}&2.6711& 2.2977 &2.1783& $4.5n^2 + 17.5n$  \\
		\cite{JiM1}&3.4311& 2.5710& 2.4147 & $13n^2+6n$  \\
		\cite{JiM2}&3.6954& 2.7711 &2.5795&  $7n^2+8n$ \\
		Theorem 3.1\cite{LiuX}$(k=10^{-6})$&3.8709& 3.3442 &3.1291&  ${20.5n^2 + 12.5n}$ \\
Theorem of \ref{Theorem1}$(k=10^{-6})$ &4.2050 &3.6674 &3.5170 &${20.5n^2 + 11.5n}$  \\  \hline
	\end{tabular}
	
\end{table}
\begin{figure}[htb!]
	\setlength{\unitlength}{1cm} 
	\begin{center}
		\resizebox{!}{8cm}{\includegraphics{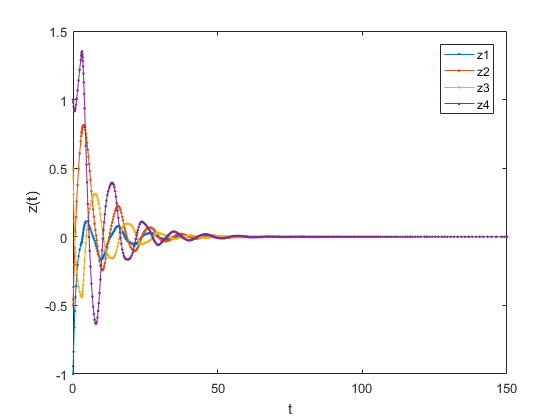}}
		\vspace{-0.1cm}
		\caption{  Trajectory  of Example 2.}\label{figure2}
	\end{center}
\end{figure}

{\bf Example 3} {\rm\cite{ChenW,HeY1,HuaC,HeYLiu,LiuX}} Consider the delayed neural network \eqref{a6} with:
\setlength{\abovedisplayskip}{10pt}
\begin{equation}\nonumber
\begin{array}{l}
A=\left[\begin{matrix}
1& 1 \\
-1& -1
\end{matrix}\right],\ \
B=\left[
\begin{matrix}
0.88& 1 \\
1 & 1
\end{matrix}
\right],\ \ C=\diag\{2,2\},
\ \ L_1=0.4,\ \ L_2=0.8.
\end{array}
\end{equation}

{This example was studied in \cite{ChenW}. We list the maximal delay bounds of $h(t)$ with different $\mu$ and fixed $k=10^{-6}$ in Table \ref{table3}. It is obvious that the results obtained by Theorem \ref{Theorem1} is better than those in \cite{ChenW,HeY1,HuaC,HeYLiu,LiuX}. The improvement show the effectiveness and superiority of our method .}


{Set $z(0)=[-1,1]^T$. The trajectory  of the delayed system \eqref{a6} with $h(t)=6.3039+0.77sin(t),\ f(z(t))=[0.4tanh(z_1(t)),0.8tanh(z_2(t))]$ is depicted in Fig. \ref{figure3}.
}

\begin{table}[H]
	\caption{Allowable $h$ for different $\mu$  (Example 3).}\label{table3}
	\setlength{\tabcolsep}{6mm}
	\centering
	\begin{tabular}{lcccccc}
		\hline
		$\mu$ &0.77   &  0.80   &0.90 & NoDVs  \\ \hline
		\cite{HeY1} &2.3368& 1.2281& 0.8636& $3.5n^2 + 15.5n$  \\
		\cite{HuaC}&2.3368& 1.2281& 0.8636& $14.5n^2 + 7.5n$  \\
		\cite{HeYLiu}&3.2681& 1.6831 &1.1493& $2.5n^2 + 15.5n $ \\
	Theorem 2 with $N=1$\cite{ChenW}&3.4373& 1.8496& 1.0904& $22n^2+8n$ \\
	Theorem 2 with $N=2$\cite{ChenW}&3.5423 &1.9149& 1.1786& $23.5n^2 + 9.5n$ \\
		Theorem 3.1\cite{LiuX}$(k=10^{-6})$&5.8372& 3.3805& 2.1714&  ${20.5n^2 + 12.5n}$ \\
			Theorem of \ref{Theorem1}$(k=10^{-6})$ &7.0739 &3.5641 &2.2092 &${20.5n^2 + 11.5n}$   \\  \hline
	\end{tabular}
	
\end{table}

\begin{figure}[htb!]
\setlength{\unitlength}{1cm} 
\begin{center}
	\resizebox{!}{8cm}{\includegraphics{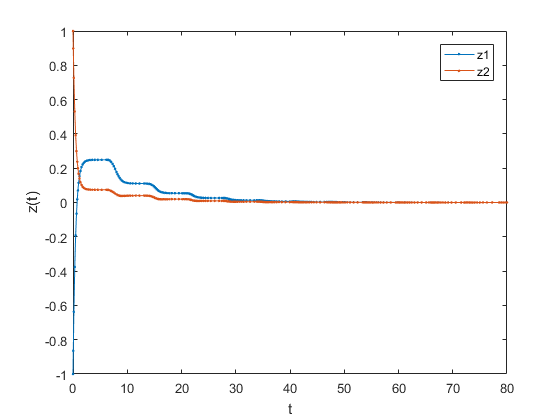}}
	\vspace{-0.1cm}
	\caption{ Trajectory of Example 3.}\label{figure3}
\end{center}
\end{figure}

\section{Conclusion}
Exponential stability for a kind of neural networks having time-varying delay is studied by extend the auxiliary function-based integral inequality with weight functions.
This weighted integral  inequality is used to analyze a Lyapunov-Krasovskii function to obtain a sharpened criterion for exponential stability.
Furthermore, when studying the Lyapunov-Krasovskii function, we find that some decision variables introduced previously can be removed without affecting performance of the proposed criterion.
{Several examples have been tested to demonstrate the advantages of the new criterion.}


\begin{thebibliography}{99}
	\bibitem{XiaoMQ1}W. Zhang, C. Li, T. Huang, M. Xiao, Synchronization of {neural networks} with stochastic perturbation via aperiodically intermittent control, Neural networks 71 (2015) 105-111.
	
	\bibitem{LiuY1}R. Yang, B. Wu, Y. Liu, A Halanay-type inequality approach to the stability analysis of discrete-time {neural networks with delays}, Applied Mathematics and Computation 265 (2015) 696-707.
	
	
	
	\bibitem{ShenH} H. Shen, Y. Zhu, L. Zhang, J. H. Park, Extended dissipative state
	estimation for Markov jump neural networks with unreliable links, IEEE
	Trans. Neural Netw. Learn. Syst.  28 (2017) 346-358.
	\bibitem{YanH} H. Yan, H. Zhang, F. Yang, X. Zhan, C. Peng, Event-triggered
	asynchronous guaranteed cost control for Markov jump discrete-time
	neural networks with distributed delay and channel fading, IEEE Trans.
	Neural Netw. Learn. Syst. 29 (2018) 3588-3598.
	\bibitem{ZhangL} L. Zhang, Y. Zhu, W. X. Zheng, Synchronization and state
	estimation of a class of hierarchical hybrid neural networks with time-varying
	delays, IEEE Trans. Neural Netw. Learn. Syst. 27 (2016) 459-470.
	
	\bibitem{MarcusC}C. Marcus, R. Westervelt, Stability of analog neural networks with delay, Phys. Rev. A 39 (1) (1989) 347-359.
	\bibitem{LiuY2}Y. Liu, D. Zhang, J. Lu, J Cao, Global $\mu$-stability criteria for quaternion-valued {neural networks} with unbounded {time-varying delays}, Information Sciences 360 (2016) 273-288.
	
	\bibitem{LiuY3}Y. Liu, P. Xu, J. Lu, J. Liang, Global stability of Clifford-valued recurrent {neural networks with time delays}, Nonlinear Dynamics 84 (2) (2016) 767-777.
	
	
	
	\bibitem{XiaoMQ3}S. Wen, Z. Zeng, T. Huang, X. Yu, M. Xiao, New criteria of passivity analysis for fuzzy {time-delay systems} with parameter uncertainties, IEEE Transactions on Fuzzy Systems 23 (6) (2015) 2284-2301.
	
	\bibitem{WangS1} W. Li, S. Wang, V. Rehbock, Numerical Solution of Fractional Optimal Control, Journal of Optimization Theory and Applications, 180 (2) (2019) 556-573.
	
	\bibitem{XiaoMQ4}J. Wang, Z. Yang, T. Huang, M. Xiao, Synchronization criteria in complex dynamical networks with nonsymmetric coupling and multiple {time-varying delays}, Applicable Analysis 91 (5) (2012) 923-935.
	\bibitem{KLTeo2}Y. Zhang, M. Wang, H. Xu, K.L. Teo, Global stabilization of switched control systems with time delay, Nonlinear Analysis: Hybrid Systems 14 (2014) 86-98.
	
	\bibitem{KLTeo3}C. Liu, R. Loxton, K.L. Teo, A computational method for solving time-delay optimal control problems with free terminal time, Systems $\&$ Control Letters 72 (2014) 53-60.
	\bibitem{XiaoMQ2}X. Dai, Y. Huang, M. Xiao, Periodically switched stability induces {exponential stability} of discrete-time linear switched systems in the sense of Markovian probabilities, Automatica 47 (7) (2011) 1512-1519.
	\bibitem{Hien} L.V. Hien, H. Trinh, Exponential stability of time-delay systems via new weighted integral inequalities,
	Applied Mathematics and Computation 275 (2016) 335-344.
	\bibitem{Vong2}C. Shi, S. Vong, Finite-time stability for discrete-time systems with time-varying delay and nonlinear perturbations by weighted inequalities, to appear in J. Frankl. Inst.
	\bibitem{Vong3} S. Vong, C. Shi, Z. Yao, Exponential synchronization of inertial neural networks
	with mixed delays via weighted integral inequalities, submitted.
	\bibitem{HeYLiu} Y. He , G. Liu , D. Rees , New delay-dependent stability criteria for neural networks with time-varying delay, IEEE Trans. Neural Netw. 18 (1) (2007) 310-314 .
	\bibitem{ParkP1} P. G. Park, J. W. Ko, C. Jeong, Reciprocally convex approach
	to stability of systems with time-varying delays, Automatica  47 (2011) 235-238.
	
	\bibitem{GuK}K. Gu, A further refinement of discretized Lyapunov functional method
	for the stability of time-delay systems, Int. J. Control 74 (2001) 967-976.
\bibitem{WangZ} Z. Wang, L. Liu, Q.H. Shan, H. Zhang, Stability criteria for
	recurrent neural networks with time-varying delay based on secondary
	delay partitioning method, IEEE Trans. Neural Netw. Learn. Syst.
	26 (2015) 2589-2595.
	\bibitem{HeY} Y. He, G. P. Liu, D. Rees, M. Wu, Stability analysis for neural
	networks with time-varying interval delay, IEEE Trans. Neural Netw.
	18 (2017) 1850-1854.
	
	{\bibitem{LIZ2}Z. Li, H. Yan, H. Zhang, X. Zhan, C. Huang,  Improved inequality-based functions approach for stability analysis of time delay system. Automatica, DOI: 10.1016/j.automatica.2019.05.033.	
	\bibitem{LIZ1}Z. Li, H. Yan, H. Zhang, Y. Peng, J. Park, Y. He, Stability analysis of linear systems with time-varying delay via intermediate polynomial-based functions, Automatica, DOI: 10.1016/j.automatica.2019.108756.
	\bibitem{LIZ3}Z. Li, H. Yan, H. Zhang, J. Sun, H. Lam, Stability and stabilization with additive freedom for delayed Takagi-Sugeno fuzzy systems by intermediary polynomial-based functions. IEEE Transactions on Fuzzy 28 (2010) 692-705.
	\bibitem{LIZ4} Z. Li, H. Yan, H. Zhang, X. Zhan, C. Huang, Stability analysis for delayed neural networks via improved auxiliary polynomial-based functions, IEEE Transactions on Neural Networks and Learning Systems 30 (8) (2019) 2562-2568.}
	
		\bibitem{LiuX}X.F. Liu, X.G. Liu, M.L. Tang, F.X. Wang, Improved exponential stability criterion for neural networks with time-varying delay, Neurocomputing 234 (2017) 154-163.
	
	
	\bibitem{ParkP} P. Park, W.I. Lee, S.Y. Lee, Auxiliary function-based integral inequalities for
	quadratic functions and their applications to time-delay systems, J. Frankl. Inst.
	352 (2015) 1378-1396.
	\bibitem{Hien2}L.V. Hien, H. Trinh, Refined Jensen-based inequality approach to stability analysis of time-delay systems, IET Control Theory Appl. 9 (2015) 2188-2194.
	
	
	
	\bibitem{Levan1}L.V. Hien, H. Trinh, New finite-sum inequalities and their applications to discrete-time delay systems, {Automatica 71 (2016) 197-201.}
	
	

	\bibitem{Vong} S.W. Vong, C.Y. Shi, D.D Liu, Improved exponential stability criteria of time-delay systems via weighted integral inequalities, Applied Mathematics Letters 86 (2018) 14-21.
	\bibitem{Park.P}P. Park, J.W. Ko, C. Jeong, Reciprocally convex approach to stability of systems
	with time-varying delays, Automatica 47 (2011) 235-238.


\bibitem{WuM} M. Wu, F. Liu, P. Shi, Y. He, R. Yokoyama, Exponential stability analysis for neural
networks with time-varying delay, IEEE Trans. Syst. Man Cybern. B Cybern. 38
(2008) 1152-1156.
\bibitem{ZhengC} C.D. Zheng, H. Zhang, Z. Wang, New delay-dependent global exponential stability
criteria for cellular-type neural networks with time-varying delays, IEEE Trans.
Circuits Syst. II 56 (2009) 250-254.
\bibitem{JiM1} M.D. Ji, Y. He, M. Wu, C.K. Zhang, New exponential stability criterion for neural
networks with time-varying delay, in: Proceedings of the 33rd Chinese Control
Conference, Nanjing, China, 2014.
\bibitem{JiM2} M.D. Ji, Y. He, M. Wu, C.K. Zhang, Further result on exponential stability of neural
network with time-varying delay, Appl. Math. Comput. 256 (2015) 175-182.
\bibitem{ChenW} W.H. Chen, W.X. Zheng, Improved delay-dependent asymptotic stability criteria
for delayed neural networks, IEEE Trans. Neural Netw. 19 (2008) 2154-2161.
\bibitem{HeY1} Y. He, Q.G. Wang, M. Wu, LMI-based stability criteria for neural networks with
multiple time-varying delays, Physica. D 212 (2005) 126-136.
\bibitem{HuaC} C.C. Hua, C.N. Long, X.P. Guan, New results on stability analysis of neural
networks with time-varying delays, Phys. Lett. A 352 (2006) 335-340.



















\end{thebibliography}
\end{document}